\documentclass[12pt]{article}
\usepackage[utf8]{inputenc}
\usepackage{amsthm}

\usepackage{hyperref, xcolor}
\usepackage{scrextend}
\hypersetup{
    colorlinks=true,
    urlcolor=magenta,
}
\usepackage{academicons}
\definecolor{orcidlogocol}{HTML}{A6CE39}
\usepackage{amsmath,amssymb,amsfonts,oldlfont,graphics,oldgerm,latexsym,dsfont}
\usepackage{graphics}
\usepackage{pstricks,pst-node,pst-arrow}
\usepackage{latexsym,amssymb}
\foreach \x in {A, ..., Z}{%
	\expandafter\xdef\csname orcid\x\endcsname{\noexpand\href{https://orcid.org/\csname orcidauthor\x\endcsname}{\noexpand\orcidicon}}
}

\usepackage{enumerate}
\usepackage{enumitem}
\usepackage{todonotes}

\PassOptionsToPackage{usenames,dvipsnames,svgnames}{xcolor}  
\usepackage{tikz}
\usetikzlibrary{arrows,positioning,automata, fit}
\usetikzlibrary{arrows.meta}
\usepackage{float}

\newtheorem{theorem}{Theorem}
\newtheorem{lemma}[theorem]{Lemma}
\newtheorem{corollary}[theorem]{Corollary}
\newtheorem{proposition}[theorem]{Proposition}

\newtheorem{conjecture}[theorem]{Conjecture}
\usepackage{mathtools}
\DeclarePairedDelimiter{\ceil}{\lceil}{\rceil}
\DeclarePairedDelimiter\floor{\lfloor}{\rfloor}
\usepackage{amssymb}

\newcommand{\dist}{{\mathrm{dist}}}
\newcommand{\Aut}{{\mathrm{Aut}}}

\usepackage{authblk}

\providecommand{\keywords}
{
  \small	
  \noindent \textbf{Keywords:} automorphism; symmetry breaking; majority edge coloring; majority distinguishing index;
}

\providecommand{\msc}
{
  \small	
  \noindent \textbf{Mathematics Subject Classification:} 05C15, 05C20, 05C25
}

\title{Majority distinguishing edge coloring}
\author{Aleksandra Gorzkowska, Magdalena Prorok} 
\affil{\small AGH University of Krakow \\ \small al. Mickiewicza 30, 30-059 Krak\'{o}w, Poland\\
\small\tt {agorzkow, prorok}@agh.edu.pl}

\begin{document}

\maketitle
\begin{abstract}
We consider edge colorings of graphs. An edge coloring is a majority coloring if for every vertex at most half of the edges incident with it are in one color. And edge coloring is a distinguishing coloring if for every non-trivial automorphism at least one edge changes its color. We consider these two notions together. We show that every graph without pendant edges has a majority distinguishing edge coloring with at most $\ceil{\sqrt{\Delta}}+5$ colors. Moreover, we show results for some classes of graphs and a~general result for symmetric digraphs.
\end{abstract}

\keywords

\msc

\section{Introduction}\label{sec:introduction}

Bock et al. in~\cite{BKPPRW} introduced majority edge coloring of graphs: For a (finite, simple, and undirected) graph $G$, an edge coloring $c : E(G) \rightarrow [k]$ is a majority edge $k$-coloring if, for every vertex $u$ of $G$ and every color $\alpha$ in $[k]$, at most half of the edges incident with $u$ have the color $\alpha$. They were interested in finding the smallest number of colors in a~majority edge coloring of a graph. 

They proved the following theorems:

\begin{theorem}~\cite{BKPPRW}\label{thm:2colors}
    Let $G$ be a connected graph. 
    \begin{enumerate}[label=(\roman*)]
        \item If $G$ has an even number of edges or $G$ contains vertices of odd degree, then $G$ has an edge 2-coloring such that, for every vertex $u$ of $G$, at most $\ceil{\frac{d_{G}(u)}{2}}$ of the edges incident with u have the same color. 
        \item If $G$ has an odd number of edges, all vertices of $G$ have even degree, and $u_G$ is any vertex of $G$, then $G$ has an edge 2-coloring such that, for every vertex $u$ of $G$ distinct from $u_G$, exactly $\frac{d_G(u)}{2}$ of the edges incident with $u$ have the same color, and exactly $\frac{d_G(u_G)}{2} + 1$ of the edges incident with $u_G$ have the same color.
    \end{enumerate}
\end{theorem}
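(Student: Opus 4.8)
The plan is to build everything on Eulerian circuits. Call a connected graph \emph{even} if all of its vertices have even degree; such a graph possesses an Eulerian circuit. The engine of the argument is the following observation: if $e_1, e_2, \dots, e_m$ is an Eulerian circuit of an even connected graph, traversed starting from a vertex $v_0$, and we color $e_i$ with color $1$ when $i$ is odd and with color $2$ when $i$ is even, then at every vertex $u \ne v_0$ the two colors appear equally often, namely $d(u)/2$ times each. Indeed, each passage of the circuit through $u$ enters along one edge and leaves along the next, and these two edges are consecutive in the circuit, so they receive different colors; since $u \ne v_0$, all $d(u)$ edges at $u$ are grouped into such consecutive pairs. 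At the start vertex $v_0$ the same pairing works except for the single pair formed by the last edge $e_m$ and the first edge $e_1$: if $m$ is even these two still get different colors, so $v_0$ is balanced as well; if $m$ is odd, both $e_m$ and $e_1$ get color $1$, hence at $v_0$ color $1$ occurs $d(v_0)/2 + 1$ times and color $2$ occurs $d(v_0)/2 - 1$ times.

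\textbf{Part (ii).} Under its hypotheses $G$ is itself a connected even graph with an odd number $m$ of edges, so I would take an Eulerian circuit starting at the prescribed vertex $u_G$ and color it alternately as above. Every vertex other than $u_G$ then receives exactly $d_G(u)/2$ edges of each color, and $u_G$ receives exactly $d_G(u_G)/2 + 1$ edges of one color and $d_G(u_G)/2 - 1$ of the other, which is precisely the claimed statement.

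\textbf{Part (i).} Here there are two cases. If every vertex of $G$ has even degree, then by hypothesis $|E(G)|$ is even, and the alternating coloring of an Eulerian circuit (with an arbitrary start) gives every vertex exactly $d_G(u)/2 = \lceil d_G(u)/2 \rceil$ edges of each color, as wanted. If instead $G$ has vertices of odd degree — necessarily an even, positive number of them — I would form $G'$ by adding one new vertex $w$ adjacent to every odd-degree vertex of $G$. Then $G'$ is connected and even, hence has an Eulerian circuit; color it alternately, starting the traversal \emph{at $w$}, so that any imbalance is confined to $w$. By the observation above, every vertex $u$ of $G$ then has exactly $d_{G'}(u)/2$ edges of each color. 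Deleting $w$ recovers the coloring of $G$: an even-degree vertex $u$ of $G$ is unaffected and keeps $d_G(u)/2$ edges of each color; an odd-degree vertex $u$ loses exactly the one edge $uw$, so one color count drops by $1$ while the other equals $(d_G(u)+1)/2 = \lceil d_G(u)/2 \rceil$, and both are at most $\lceil d_G(u)/2 \rceil$.

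I do not expect a genuine obstacle here; the care needed is purely in the bookkeeping. The points to check are that the single ``defect pair'' $(e_m, e_1)$ of an odd-length Eulerian circuit can be placed at a vertex of our choosing (so that in (ii) it lands on $u_G$, and in the odd-degree subcase of (i) it lands on the auxiliary vertex $w$ and thereby disappears), that $G'$ stays connected after adding $w$ (true because $G$ is connected and $w$ has at least one neighbor), and that deleting the edges at $w$ can only decrease a color count at a vertex of $G$, never push it above $\lceil d_G(u)/2 \rceil$. The degenerate cases ($G$ edgeless, or $G = K_2$) are checked directly.
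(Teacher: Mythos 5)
Your proof is correct. The paper does not prove Theorem~\ref{thm:2colors} itself but imports it from~\cite{BKPPRW}, and your argument --- alternating colors along an Eulerian circuit, placing the single defect pair at a chosen start vertex, and handling odd-degree vertices by attaching an auxiliary vertex $w$ to all of them --- is essentially the standard proof given in that source, with the bookkeeping at $u_G$ and at the deleted vertex $w$ carried out correctly.
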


In this paper whenever we use the coloring from Theorem~\ref{thm:2colors}, we refer to the vertex $u_G$ as a \textit{special vertex}. 

\begin{theorem}~\cite{BKPPRW}\label{thm:deg2}
    Every graph of minimum degree at least 2 has a majority edge 4-coloring.
\end{theorem}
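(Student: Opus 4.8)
The plan is to bypass the majority condition entirely and reduce, via a vertex-splitting operation, to an ordinary proper edge coloring of a graph of maximum degree at most $3$, which Vizing's theorem colors with only $4$ colors. First I would replace each vertex $v$ of $G$ by $\ceil{d_G(v)/3}$ \emph{copies} and distribute the $d_G(v)$ edges incident with $v$ among these copies so that each copy receives at most three of them (possible since $3\ceil{d_G(v)/3}\ge d_G(v)$). Carrying this out at every vertex yields a graph $G'$ together with a natural bijection between $E(G)$ and $E(G')$. Since $G$ has no multiple edges, $G'$ is simple, and by construction $\Delta(G')\le 3$. Note that for a vertex of degree $2$ or $3$ the operation does nothing, which is exactly what is wanted there.

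Next I would invoke Vizing's theorem: $\chi'(G')\le\Delta(G')+1\le 4$, so $G'$ has a proper edge coloring with colors in $[4]$, and I transport it back to a coloring $c$ of $E(G)$ along the bijection. To check that $c$ is a majority edge $4$-coloring, fix a vertex $v$ of $G$ and a color $\alpha$: the edges at $v$ in $G$ are partitioned among the $\ceil{d_G(v)/3}$ copies of $v$, and since the coloring of $G'$ is proper, each copy is incident with at most one $\alpha$-colored edge; hence at most $\ceil{d_G(v)/3}$ edges at $v$ have color $\alpha$. It then remains to verify the elementary inequality $\ceil{d_G(v)/3}\le\floor{d_G(v)/2}$, which holds for every $d_G(v)\ge 2$.

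I do not expect a genuine obstacle here; the only delicate point is the bookkeeping around the number of copies. Choosing $\ceil{d/3}$ copies is the sweet spot: it is small enough that $\ceil{d/3}\le\floor{d/2}$, so the majority condition survives the pull-back, yet large enough that every copy has degree at most $3$, so that Vizing's theorem costs only $4$ colors. One also has to confirm that the splitting keeps $G'$ simple (it does, because $G$ is) and note that vertices of degree $2$ and $3$ are left unsplit, where $c$ is forced to be — and automatically is — proper. Finally, the hypothesis $\delta(G)\ge 2$ enters solely through the inequality $\ceil{d/3}\le\floor{d/2}$, which fails at $d=1$; this is unavoidable, since a pendant edge admits no majority coloring at its leaf.
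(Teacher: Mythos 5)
Your proof is correct, and it is essentially the same argument as the one in the cited source: the paper itself summarizes that proof (just before Corollary~\ref{cor:bipartite}) as decomposing the edge set so that each vertex of the auxiliary graph has small degree, properly edge-coloring that graph with Vizing's theorem, and pulling the coloring back — which is exactly your vertex-splitting construction. The key inequality $\ceil{d/3}\le\floor{d/2}$ for $d\ge 2$ and the simplicity of the split graph both check out, so there is nothing to add.
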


\begin{theorem}~\cite{BKPPRW}\label{thm:deg4}
    Every graph of minimum degree at least 4 has a majority edge 3-coloring.
\end{theorem}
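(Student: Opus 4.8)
\medskip
\noindent\textbf{Proposed approach.}
The plan is to set aside a subgraph of small degrees, color it with the third color, and finish with a balanced two-coloring supplied by Theorem~\ref{thm:2colors}. First reduce to the case that $G$ is connected, treating components independently. Let $T$ be the set of odd-degree vertices of $G$. The technical core is to find a spanning subgraph $H\subseteq G$ such that $d_H(v)\le\lfloor d_G(v)/2\rfloor$ for every vertex $v$ and $d_H(v)\ge 1$ for every $v\in T$. This is a degree-constrained subgraph problem: a $(g,f)$-factor with $g(v)=1$ for $v\in T$, $g(v)=0$ otherwise, and $f(v)=\lfloor d_G(v)/2\rfloor$; since $\delta(G)\ge 4$ gives $f(v)\ge 2>1\ge g(v)$, there is slack at every vertex, which is exactly what should make the factor criterion verifiable (alternatively, phrase the search for $H$ as a network flow, or as a defective-Hall assignment of each $t\in T$ to an incident edge under per-vertex capacities $\lfloor d_G(v)/2\rfloor$, the point being that high-degree vertices carry proportionally large capacity).

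Given $H$, color every edge of $H$ with color $3$ and color $G-E(H)$, component by component, with colors $\{1,2\}$ using Theorem~\ref{thm:2colors}. At a vertex $v$, color $3$ appears $d_H(v)\le\lfloor d_G(v)/2\rfloor$ times. For colors $1$ and $2$: if $d_{G-E(H)}(v)$ is even, each appears exactly $d_{G-E(H)}(v)/2\le\lfloor d_G(v)/2\rfloor$ times; and if $d_{G-E(H)}(v)$ is odd, then comparing the parities of $d_G(v)$ and $d_H(v)$ shows $d_H(v)\ge 1$ in every case (this is precisely why the two conditions on $H$ are imposed), so each of colors $1,2$ appears at most $\lceil d_{G-E(H)}(v)/2\rceil=(d_G(v)-d_H(v)+1)/2\le d_G(v)/2$ times.

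The one remaining subtlety is a \emph{special vertex} of Theorem~\ref{thm:2colors}(ii): in a component $C$ of $G-E(H)$ with all degrees even and an odd number of edges, one vertex $u$ has some color repeated $d_{G-E(H)}(u)/2+1$ times, which is admissible only if $d_H(u)\ge 2$ (or $\ge 3$ when $d_G(u)$ is odd). Since Theorem~\ref{thm:2colors}(ii) lets $u$ be chosen anywhere in $C$, take $u$ with $d_H$-degree large enough; if no vertex of $C$ qualifies then every vertex of $C$ has $d_H$-degree in $\{0,1\}$, and moving one edge of $C$ into color $3$ leaves $C$ with exactly two vertices of odd degree (the edge is not a bridge, since a connected graph with all degrees even is bridgeless), so Theorem~\ref{thm:2colors}(i) now applies to $C$ with no special vertex, while the two recolored endpoints each receive at most $d_H(\cdot)+1\le 2\le\lfloor d_G(\cdot)/2\rfloor$ edges of color $3$.

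The main obstacle I anticipate is the existence of $H$; once $H$ is in hand, everything else is routine bookkeeping. An alternative route, in which a similar difficulty reappears, is to partition $E(G)=E_1\cup E_2$ with $\delta(G_i)\ge 2$, color $G_1$ with $\{1,2\}$ and $G_2$ with $\{1,3\}$, and then repair the shared color $1$; there the delicate requirement is that for each $i$ the non-shared color can be made a weak majority at every vertex of $G_i$ --- once more a degree-constrained subgraph condition, this time with no slack at some vertices.
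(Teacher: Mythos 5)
This theorem is quoted from~\cite{BKPPRW}; the paper under review contains no proof of it, so there is no in-house argument to compare against. (The discussion at the start of Section~\ref{sec:digraphs} indicates that the method of~\cite{BKPPRW} for the companion four-color theorem is an edge-decomposition into pieces with all degrees two or three followed by a proper edge coloring of the resulting graph; your route, a degree-constrained spanning subgraph reserved for the third color plus Theorem~\ref{thm:2colors} on the rest, is a genuinely different mechanism.) Your bookkeeping is correct and careful: you correctly isolate that $d_H(v)\ge 1$ is needed exactly when $d_{G-E(H)}(v)$ is odd (and follows from parity except at odd-degree vertices of $G$, where you impose it), and your repair of the special vertex of Theorem~\ref{thm:2colors}(ii), including the $\ge 3$ requirement at odd-degree vertices and the bridgeless observation, is sound. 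One minor omission in the fallback: after moving an edge $xy$ of $C$ to color $3$, you should also recheck colors $1,2$ at $x$ and $y$, where $d_{C-xy}$ is now odd; the count is $\lceil (d_C(x)-1)/2\rceil=d_C(x)/2\le d_G(x)/2$, so it is fine, but it needs saying.

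The one genuine gap is the existence of $H$, which you flag as the technical core but do not prove. It does close along the route you name. Take $g(v)=1$ for odd-degree $v$, $g(v)=0$ otherwise, and $f(v)=\lfloor d_G(v)/2\rfloor\ge 2>g(v)$. Since $g(v)<f(v)$ everywhere, the parity term in the Lov\'asz $(g,f)$-factor criterion vanishes, and the criterion reduces to requiring, for all disjoint $S,U\subseteq V(G)$, that $\sum_{v\in S}f(v)+\sum_{v\in U}\bigl(d_{G-S}(v)-g(v)\bigr)\ge 0$. Every summand in the second sum is at least $-1$, and is negative only for odd-degree vertices with all neighbours in $S$; so the condition amounts to $\sum_{v\in S}f(v)\ge |W|$ for every $S$, where $W$ is the set of odd-degree vertices outside $S$ whose entire neighbourhood lies in $S$. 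Each $v\in W$ has odd degree at least $5$ (here $\delta\ge 4$ is used again), so $5|W|\le e(S,W)\le\sum_{v\in S}d_G(v)\le\sum_{v\in S}(2f(v)+1)\le\tfrac{5}{2}\sum_{v\in S}f(v)$, using $f\ge 2$ in the last step; hence $\sum_{v\in S}f(v)\ge 2|W|\ge |W|$. With this verification supplied, your proof is complete.
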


In their paper they did not associate a parameter with majority colorings. The {\it majority index} $M'(G)$ of a graph $G$ is the smallest integer $k$ such that $G$ has a~majority edge coloring with $k$ colors.  

Distinguishing coloring was introduced in 2015 by Kalinowski and Pil\'sniak in~\cite{KP}. An edge coloring of graph $G$ is {\it distinguishing} if it is preserved only by the identity automorphism of $G$. The smallest number of colors needed for such a~coloring is called {\it the distinguishing index} of $G$ and denoted by $D'(G)$. In their original paper Kalinowski and Pil\'sniak proved the following.

\begin{theorem}\cite{KP}\label{thm:D'}
    If $G$ is a connected graph of order $n \geq 3$, then $D'(G) \leq \Delta(G)$ except for three small cycles $C_3$, $C_4$ or $C_5$.
\end{theorem}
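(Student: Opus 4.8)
The plan is to split the argument by the maximum degree $\Delta:=\Delta(G)$. If $\Delta\le 2$, then $G$ is a path $P_n$ or a cycle $C_n$ with $n\ge 3$, and I would settle these by hand. A path has a unique nontrivial automorphism, the reflection, which is destroyed by any $2$-colouring of its $n-1\ge 2$ edges whose colour sequence is not a palindrome, so $D'(P_n)=2=\Delta$. For $C_n$ the automorphism group is dihedral of order $2n$, and a short case analysis shows that $2$ colours are too few exactly when $n\in\{3,4,5\}$ (giving $D'=3>2=\Delta$), while for $n\ge 6$ any colour sequence around the cycle that is aperiodic and whose reversal is not one of its rotations is distinguishing, so $D'(C_n)=2=\Delta$. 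This isolates precisely the three exceptional graphs and reduces the theorem to the case $\Delta\ge 3$.

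Next I would treat trees $T$ with $\Delta\ge 3$. Root $T$ at its centre; by Jordan's theorem the centre is a vertex $r$ or an edge $r_1r_2$ fixed by every automorphism of $T$, so every automorphism respects the parent--child relation of the rooting. Colour the edges greedily top-down. The key point is that $\Delta$ colours always suffice to give the at most $\Delta-1$ edges from a non-root vertex to its children pairwise distinct colours; this already separates children carrying isomorphic subtrees, while children with non-isomorphic subtrees are separated for free. One thereby gets a colouring in which, at every vertex, the colour-labelled rooted subtrees hanging below are pairwise distinct, which kills every automorphism fixing the centre. In the edge-centre case one additionally forces the colour-labelled rooted subtrees hanging at $r_1$ and at $r_2$ to differ, recolouring a single pendant edge on the $r_1$-side if necessary; this is possible exactly because $\Delta\ge 3$ (the degenerate configuration forcing equality is $P_4$, where $\Delta=2$). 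Careful bookkeeping turns this into a distinguishing $\Delta$-colouring of $T$.

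For a connected graph $G$ with $\Delta\ge 3$ containing a cycle, I would run a breadth-first search from a carefully chosen root $r$ (for instance one of maximum degree), obtaining levels $L_0=\{r\},L_1,L_2,\dots$ and a BFS spanning tree $T$; every non-tree edge joins vertices on the same or consecutive levels and provides extra colour room. Colour the tree edges top-down as above, but now aiming at the sharper goal that distinct vertices on a common level receive distinguishable colour codes relative to $r$ (say, the colour sequence along the tree-path from $r$); then colour the non-tree edges so as to (i) make $r$ the unique vertex exhibiting a prescribed local colour pattern, and (ii) finish off any residual symmetry. Once (i) holds, every colour-preserving automorphism fixes $r$, hence preserves the distance levels, hence acts on the rooted tree structure, and is then annihilated by the colour codes exactly as in the tree case.

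I expect the main obstacle to be the tight-slack regime. When $\Delta=3$, or more generally when many degree-$\Delta$ vertices carry few incident non-tree edges, there is barely enough colour budget to perform the greedy step, and, worse, the argument pinning down the root by a unique local pattern can break. Consequently a short list of small and/or highly symmetric graphs (small complete and complete bipartite graphs such as $K_4$ and $K_{3,3}$, cubic vertex-transitive graphs of small order, and the like) would have to be inspected individually to confirm $D'(G)\le\Delta$ directly; away from this list the BFS colouring should go through with room to spare. This ad hoc endgame, together with the consistent bookkeeping required to make the greedy top-down colouring well defined, is where essentially all the work lies.
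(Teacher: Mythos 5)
The paper does not prove this statement at all: Theorem~\ref{thm:D'} is quoted from Kalinowski and Pil\'sniak~\cite{KP} and used as a black box, so there is no in-paper proof to compare your attempt against. Judged on its own, your proposal has the right skeleton --- the case split $\Delta\le 2$ versus $\Delta\ge 3$, the correct identification of $C_3,C_4,C_5$ as the only exceptions among paths and cycles, and the tree argument (root at the Jordan centre, give the edges to the children of each vertex pairwise distinct colours, and use the spare $\Delta$-th colour to break the swap of the two centre vertices) are all sound and close to the published treatment.

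The genuine gap is in the cyclic case, and it sits exactly where the theorem is hard. Your plan hinges on step (i), ``make $r$ the unique vertex exhibiting a prescribed local colour pattern,'' after which everything reduces to the rooted-tree argument; but you never say how to achieve (i). For a $\Delta$-regular, vertex- or edge-transitive graph (you name cubic vertex-transitive graphs yourself), every vertex sees the same unlabelled local structure, you have only $\Delta$ colours, and a colour-preserving automorphism has no a priori reason to fix the BFS root --- so the level structure, the colour codes along tree-paths, and step (ii) (``finish off any residual symmetry'') are all conditional on a step that is left entirely open. Deferring this to ``a short list of small and/or highly symmetric graphs to be inspected individually'' is not a proof: you neither exhibit the list nor argue that it is finite, and without that the statement remains unproved for infinitely many graphs (all the tight-slack regulars you flag). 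The published proof closes precisely this hole by a different device --- fixing a distinguished substructure (a longest cycle, or a pair of vertices $a,b$ with all spheres between them) whose endpoints are forced to be fixed by the colouring, and then propagating fixedness sphere by sphere with an explicit count of available colours versus forbidden ones; this is the same sphere machinery the present paper adapts in Lemma~\ref{lem:H} and Theorem~\ref{thm:2-connected}. So your write-up should be read as a correct reduction of the theorem to its hardest step, not as a proof of it.
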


In the same paper they also defined the distinguishing chromatic index $\chi'_D(G)$ of a graph $G$ as the smallest number $d$ such that $G$ has a proper edge coloring with $d$ colors that is preserved only by the identity automorphism of $G$.

\begin{theorem}\cite{KP}\label{thm:KP15}
    If $G$ is a connected graph of order $n \geq 3$, then $$\chi'_D(G) \leq \Delta(G)+1$$ except for four graphs of small order $C_4, K_4, C_6, K_{3,3}$.
\end{theorem}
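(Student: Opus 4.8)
The plan is to start from Vizing's theorem — every graph has a proper edge coloring with $\Delta := \Delta(G)$ or $\Delta+1$ colors — and then upgrade such a coloring into one that is additionally distinguishing, treating small-degree graphs by hand. First I would dispose of $\Delta \le 2$: then $G$ is a path or a cycle. A short direct check shows a proper $3$-edge-coloring of $P_n$ ($n \ge 3$) can always be made asymmetric, and likewise for $C_n$ provided $n \notin \{4,6\}$, whereas $C_4$ and $C_6$ genuinely require a fourth color (a proper $3$-coloring of a short even cycle is essentially forced and fixed by a reflection); these account for two of the four exceptions. From now on $\Delta \ge 3$, and I split according to the Vizing class of $G$.

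Class~1 ($\chi'(G) = \Delta$). This is where the remaining exceptions sit, since $K_4$ and $K_{3,3}$ — like all bipartite graphs — satisfy $\chi'(G)=\Delta$. Fix a proper $\Delta$-edge-coloring $c_0$ with colors $[\Delta]$, leaving color $\Delta+1$ unused, and note that recoloring any matching $M$ with the spare color keeps the coloring proper, whatever $M$ is. So it is enough to choose $M$ so that the resulting coloring is fixed by no nontrivial automorphism. For trees and ``generic'' graphs I would root $G$, run a BFS, and greedily pick marked edges layer by layer so that every vertex gets a locally unique signature (its incident palette plus the pattern of marks on its incident edges), using that $D'(G) \le \Delta$ (Theorem~\ref{thm:D'}) already provides freedom in the colors and $\Delta \ge 3$ leaves room inside each layer; then an automorphism preserving the coloring must fix the root and then each layer pointwise, hence be trivial.

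Class~2 ($\chi'(G) = \Delta+1$). Now there is no spare color, but every vertex has degree at most $\Delta < \Delta+1$, so at least one color is missing at each vertex. This gives local moves — recolor an edge $uv$ by a color missing at both $u$ and $v$, or flip a two-colored Kempe chain — connecting a large family of proper $(\Delta+1)$-colorings. Starting from any proper $(\Delta+1)$-edge-coloring $c$ fixed by some nontrivial $\varphi \in \Aut(G)$, I would pick an edge on a nontrivial $\langle\varphi\rangle$-orbit and perform a local recoloring on one orbit representative that $\varphi$ then cannot match, argue that this strictly shrinks $\Aut(G,c)$, and iterate; finiteness of $\Aut(G)$ forces termination at a distinguishing coloring, and one checks no exception arises among Class~2 graphs with $\Delta \ge 3$.

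The main obstacle is the highly symmetric graphs, above all $K_n$ and $K_{n,n}$ (and a few near-relatives): there a proper $\Delta$- or $(\Delta+1)$-coloring is essentially a $1$-factorization, respectively a Latin square, with a large automorphism group, so the generic ``mark a matching'' or ``flip a Kempe chain'' moves do not obviously destroy every symmetry. For these I would supply tailored constructions — a carefully selected $1$-factorization together with an explicit recoloring pattern — and verify rigidity directly, both to get the bound for large $n$ and to confirm that $C_4$, $C_6$, $K_4$, $K_{3,3}$ really are the only graphs where $\Delta+1$ colors do not suffice. Turning the ``a local move strictly shrinks $\Aut(G,c)$'' step into a clean argument, and carrying out the finite but fiddly case analysis that pins down exactly those four exceptions, is where the genuine effort lies.
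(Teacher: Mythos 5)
This statement is quoted verbatim from Kalinowski and Pil\'sniak~\cite{KP}; the paper you are reading gives no proof of it, so there is nothing internal to compare your argument against. Judged on its own, your proposal is a reasonable high-level outline of the known strategy (Vizing classes plus hand treatment of paths, cycles, and the highly symmetric graphs), but it has two concrete gaps that prevent it from being a proof.

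First, in the Class~1 case your mechanism for breaking symmetry is a ``locally unique signature'' consisting of the incident palette plus the pattern of marks from the recolored matching $M$. This cannot work as stated for regular graphs: in a $\Delta$-regular Class~1 graph every vertex initially sees all $\Delta$ colors, and after recoloring $M$ with the spare color the two endpoints of any marked edge are missing exactly the same color and carry exactly the same mark-pattern, so their signatures coincide and any automorphism swapping them survives. (This is precisely what happens in $K_4$, and it is not confined to the listed exceptions.) Distinguishing such vertices requires global structural information, not local palettes, and your sketch does not supply it. Second, in the Class~2 case the step ``perform a local recoloring that $\varphi$ cannot match, so $\Aut(G,c)$ strictly shrinks, and iterate'' is unsound as a termination argument: the automorphism group of the modified coloring is not in general a subgroup of the old one, so breaking one automorphism may introduce others and the iteration need not make progress. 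You would need a monotone potential or, as in the actual proof in~\cite{KP}, a direct construction. Finally, the claim that exactly $C_4$, $K_4$, $C_6$, $K_{3,3}$ are the exceptions is deferred entirely to ``a finite but fiddly case analysis,'' which is where much of the content of the theorem lies. Since the theorem is an imported citation here, the right move in this paper is simply to cite~\cite{KP} rather than attempt a reproof.
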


In this article we consider edge colorings of graphs that are both distinguishing and majority colorings. We introduce the {\it majority distinguishing index} $M'_D(G)$ of a graph~$G$ as the smallest number of colors in such a coloring. We notice that for graphs with $\Delta(G) \leq 3$ the majority coloring is precisely the proper coloring. Therefore, we will not examine this case. For any graph $G$ trivially holds $M'_D(G) \leq \chi'_D(G)$. We also consider this coloring and parameter for symmetric digraphs. For a (finite) directed graph $G$ (called also a digraph), an arc coloring $c : A(G) \rightarrow [k]$ is a \textit{majority arc $k$-coloring} if, for every vertex~$u$ of $G$ and every color $\alpha$ in $[k]$, at most half the arcs entering $u$ have the color $\alpha$ and at most half of the arcs leaving $u$ have the color $\alpha$. A symmetric digraph $\overleftrightarrow{G}$ is obtained from an undirected graph~$G$ by replacing each edge $uv$ of $G$  by a pair of opposite arcs $\overrightarrow{uv}$ and $\overrightarrow{vu}$.

The paper is organized as follows. 
In Section~\ref{sec:notation} we introduce the used notations. 
In Section~\ref{sec:preliminaries} we state preliminary results useful to further theorems.
In Section~\ref{sec:majority} we show the examples of families of graphs with majority distinguishing 2-edge coloring. We also consider the majority distinguishing index for traceable graphs, especially complete graphs and complete balanced bipartite graphs. 
In Section~\ref{sec:main} we introduce the main result, which is the bound for majority distinguishing index, with respect to $\Delta$, for any graph (Theorem~\ref{thm:main}). 
Finally, in Section~\ref{sec:digraphs} we state a result for the majority index for any bipartite graph and we consider the  majority and majority distinguishing coloring for symmetric digraphs and we show the bound on the majority distinguishing index.  

\section{Notation}\label{sec:notation}

Majority edge coloring is well-defined only for graphs with minimum degree at least two. For simplicity, in the proofs, we extend this definition to all graphs. For a graph $G$ we define an {\it almost majority edge coloring} to be an edge coloring of $G$ such that for every vertex of degree at least two at most half of the edges incident with it have the same color and the edge incident with a vertex of degree one has any color. In particular, a graph $G$ with $\delta(G) \geq 2$ has an almost majority edge coloring with $k$ colors if it has a majority edge coloring with $k$ colors. 
For brevity, for any color~$\alpha$ and any vertex $u$, we denote by $d^{\alpha}(u)$ the number of edges in color~$\alpha$ incident with $u$.
Moreover, we define a {\it weak majority coloring} of a graph $G$ as an edge coloring $c \colon E(G) \rightarrow [k]$ such that for every vertex $u$ of $G$ and every color $\alpha$ in $[k]$ it holds $d^{\alpha}(u) \leq \ceil{\frac{d(u)}{2}}$.

An {\it asymmetric spanning subgraph} $H$ of $G$ is a subgraph of graph $G$ such that $V(H)=V(G)$ and $H$ has only the identity automorphism. For two graphs $G$ and~$H$, such that $G \subset H$, we define $H-G$ as a graph with the same set of vertices as~$H$ and the edge set $E(H) \setminus E(G)$.

For a given vertex $a$ of a graph $H$, we denote by $\Aut(H)_a$ the stabilizer of a~vertex $a$, i.e. $\Aut(H)_a=\{\varphi \in \Aut(H) : \varphi(a)=a\}$. For two vertices $a,b$, we denote $\Aut(H)_{a,b}=\Aut(H)_a \cap \Aut(H)_b$.
By $\dist(a,b)$ we denote the length of the shortest path between vertices $a$ and $b$.
For $r \in \{0,1, \ldots, \dist(a,b)\}$, let
$$S_r(a)=\{v \in V(H) : \dist (a,v)=r\}$$ be the $r$-th sphere at the vertex $a$. 

Two edge colorings $c_1, c_2$ of a graph $H$ we call \textit{isomorphic with respect to a group} $\Gamma \subseteq \Aut(H)$ if there exists an automorphism $\varphi \in \Gamma$ such that $c_1(uv)=c_2(\varphi(u)\varphi(v))$ for every edge $uv \in E(H)$. Furthermore, $c_1, c_2$ are called \textit{isomorphic} if they are isomorphic with respect to $\Aut(H)$. 

\section{Preliminaries}\label{sec:preliminaries}

In order to introduce the general results bounding the majority distinguishing index, we state the following corollary and lemma. 

\begin{corollary}\label{cor:almost}
Every graph $G$ 
has an almost majority coloring with 4 colors.
\end{corollary}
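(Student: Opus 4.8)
The plan is to reduce to the case of graphs with minimum degree at least two, where Theorem~\ref{thm:deg2} already gives a majority edge $4$-coloring. So the only issue is how to handle vertices of degree one (pendant edges) and isolated vertices. Since an almost majority coloring places no constraint on edges incident to a degree-one vertex, the strategy is to delete pendant vertices repeatedly, color what remains, and then color the pendant edges back in arbitrarily (say, all in color $1$).

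First I would dispose of trivial components: an isolated vertex imposes no condition, so we may assume every component has at least one edge. Next, given $G$, I would iteratively remove vertices of degree one; let $G'$ be the resulting graph (the ``$2$-core'' in the sense of repeatedly stripping leaves). Every vertex of $G'$ that is not isolated has degree at least two in $G'$: if $V(G')$ still contains isolated vertices, those correspond to tree components or parts of $G$ that collapse entirely, and they can be ignored. On the non-trivial part of $G'$ we have $\delta \geq 2$, so Theorem~\ref{thm:deg2} supplies a majority edge $4$-coloring $c'$ of that part. Then I would color every edge of $E(G)\setminus E(G')$ with color $1$. I need to check this yields an almost majority coloring of $G$: any vertex $u$ with $d_G(u)\geq 2$ either lies in $G'$ with $d_{G'}(u)=d_G(u)$ (so it still satisfies the majority condition under $c'$), or it lost some incident edges in passing to $G'$ — but a vertex only loses incident edges to the stripping process when those edges led to degree-one vertices, and such a vertex $u$ either keeps degree $\geq 2$ in $G'$ (fine, as above) or ends up with degree $0$ or $1$ in $G'$, in which case all of its edges in $G$ are pendant-type edges colored $1$, and $u$ itself has degree $\le 1$ in $G'$; but then $d_G(u)$ could still be $\ge 2$, so this case needs a little care.

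The main obstacle — and the one point that needs an actual argument rather than a citation — is precisely this last case: a vertex $u$ with $d_G(u)\ge 2$ all of whose incident edges get deleted during leaf-stripping. This happens when $u$ lies in a tree-like appendage of $G$. To handle it cleanly, I would not strip leaves all the way down; instead I would only peel off pendant edges once, or better, partition $E(G)$ as $E(G') \cup F$ where $G'$ is the subgraph induced by all edges lying on some cycle together with a single spanning-forest-free remainder, ensuring that every vertex of degree $\ge 2$ in $G$ has at least two incident edges inside $G'$ — this is achieved by taking $G'$ to be $G$ minus a maximal set of edges whose removal does not drop any vertex of current degree $\ge 2$ below degree $2$. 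Equivalently: repeatedly delete an edge incident to a current vertex of degree exactly one. The resulting $G'$ has the property that every vertex has degree $0$, or degree $\ge 2$; apply Theorem~\ref{thm:deg2} to the components of $G'$ with edges, color the deleted edges (which form a forest, each meeting a degree-$\le 1$ vertex of $G'$) with color $1$, and verify the almost-majority condition holds at every vertex, which is now routine since every $u$ with $d_G(u)\ge 2$ retains $d_{G'}(u)\ge 2$ incident edges under a genuine majority coloring and gains only color-$1$ edges that, by construction, number at most $d_{G'}(u)$... in fact one checks $d_G^1(u) \le \lceil d_G(u)/2\rceil$ directly. I would present this peeling construction explicitly and then invoke Theorem~\ref{thm:deg2} as a black box.
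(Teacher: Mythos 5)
The paper itself offers no construction here: it simply declares the corollary a ``reformulation'' of Theorem~\ref{thm:deg2}, so you are right that a reduction to the minimum-degree-two case is what is needed. However, your reduction has genuine gaps. The subgraph $G'$ you describe is defined in two non-equivalent ways, and neither works. If you strip pendant edges repeatedly (down to the $2$-core), then every edge of a tree component or tree appendage is removed; an internal vertex $u$ of such a tree has $d_G(u)\geq 2$ but $d_{G'}(u)=0$, so after painting all removed edges with color $1$ you get $d^1(u)=d_G(u)>d_G(u)/2$ --- already $P_3$ is a counterexample. If instead you take $G'$ to be $G$ minus a maximal set of edges whose removal keeps every degree-$\geq 2$ vertex at degree $\geq 2$, then $G'$ can still contain vertices of degree one (again $P_3$: no edge is removable), so Theorem~\ref{thm:deg2} does not apply to it. Moreover, even at a vertex $u$ that does retain $d_{G'}(u)\geq 2$, coloring \emph{all} removed edges with the single color $1$ breaks the majority condition: if $u$ has three pendant neighbours and two further edges on a cycle, then $d_G(u)=5$ and $d^1(u)\geq 3>5/2$. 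Finally, the inequality you propose to verify, $d^1_G(u)\leq\lceil d_G(u)/2\rceil$, is the \emph{weak} majority bound; an almost majority coloring requires $d^{\alpha}(u)\leq d_G(u)/2$ at every vertex of degree at least two.

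The reduction should go in the opposite direction: instead of deleting edges and recoloring them, \emph{augment} $G$ to a supergraph $G^{+}$ with $\delta(G^{+})\geq 2$ by adding edges only at vertices of degree at most one (for instance, attach a new triangle through each such vertex, or pair the leaves up by new edges), apply Theorem~\ref{thm:deg2} to $G^{+}$, and restrict the resulting coloring to $G$. Every vertex $u$ with $d_G(u)\geq 2$ has exactly the same set of incident edges in $G^{+}$ as in $G$, so the condition $d^{\alpha}(u)\leq d_{G^{+}}(u)/2=d_G(u)/2$ is inherited verbatim, while edges at degree-one vertices are unconstrained by the definition of almost majority coloring. This one-line augmentation argument is presumably what the authors intend by ``reformulation''.
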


Above corollary is a reformulation of Theorem~\ref{thm:deg2} in context of the definition of almost majority coloring. Let us notice that this corollary can be strengthened in some special cases. If 
the degrees of all vertices of $G$ except for leaves are even, then $G$ has an almost majority coloring with two colors.  

\begin{lemma}\label{lem:a+b}
    Let $G$ be a graph with $\delta(G)\geq2$ and $H$ be its connected spanning subgraph with an $a$-coloring such that for every vertex $v \in V(H)$ and color $\alpha \in [a]$ it holds $d^{\alpha}_H(v) \leq \frac{d_G(v)}{2}$. If $b$ is the number of colors in weak majority coloring of $G-H$, then $M'_D(G) \leq a+b$.
\end{lemma}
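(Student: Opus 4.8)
The plan is to construct a majority distinguishing edge coloring of $G$ by combining the given coloring of $H$ with a weak majority coloring of $G-H$, keeping the two color sets disjoint. Let the $a$ colors used on $H$ be $1,\dots,a$ and the $b$ colors used on $G-H$ be $a+1,\dots,a+b$; color every edge of $G$ according to whichever of $H$ or $G-H$ it belongs to. I must check two things: that this is a majority edge coloring of $G$, and that it is distinguishing.

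First I would verify the majority condition at an arbitrary vertex $v$. For a color $\alpha\in[a]$, only edges of $H$ can have color $\alpha$, so $d^\alpha_G(v)=d^\alpha_H(v)\leq \tfrac{d_G(v)}{2}$ by hypothesis, which is exactly what is needed. For a color $\alpha\in\{a+1,\dots,a+b\}$, only edges of $G-H$ can have color $\alpha$, and by the weak majority property of the coloring of $G-H$ we get $d^\alpha_G(v)=d^\alpha_{G-H}(v)\leq \ceil{\tfrac{d_{G-H}(v)}{2}}$. Here I would use that $d_{G-H}(v)=d_G(v)-d_H(v)$ together with the fact that $H$ is a \emph{spanning connected} subgraph, hence $d_H(v)\geq 1$ for every $v$; this gives $d_{G-H}(v)\leq d_G(v)-1$, and so $\ceil{\tfrac{d_{G-H}(v)}{2}}\leq\ceil{\tfrac{d_G(v)-1}{2}}\leq\tfrac{d_G(v)}{2}$ (the last inequality because $\ceil{\tfrac{m-1}{2}}\leq\tfrac{m}{2}$ for every integer $m\geq 1$, as one checks separately for $m$ even and odd). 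Thus in both cases $d^\alpha_G(v)\leq\tfrac{d_G(v)}{2}$, so the coloring is a genuine majority coloring — in fact this is the step where the connectedness/spanning hypothesis on $H$ is essential, and it is the only mildly delicate point.

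Next, the distinguishing property. Let $\varphi\in\Aut(G)$ preserve the coloring $c$. Since the color classes $\{1,\dots,a\}$ and $\{a+1,\dots,a+b\}$ are unions of color classes of $c$, any color-preserving automorphism must map $E(H)$ to $E(H)$ and $E(G-H)$ to $E(G-H)$. In particular $\varphi$ restricts to an automorphism of $H$ (as a spanning subgraph, $\varphi$ permutes $V(G)=V(H)$ and preserves adjacency within $H$) that preserves the $a$-coloring of $H$. But $H$ was chosen to have only the identity automorphism, so $\varphi=\mathrm{id}$. Hence $c$ is distinguishing, and since it uses $a+b$ colors we conclude $M'_D(G)\leq a+b$. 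I would remark that the coloring of $H$ need not itself be distinguishing as a coloring; it is the asymmetry of $H$ as an abstract graph that does the work, and the role of the coloring of $H$ is only to contribute the majority bound via the disjointness of palettes.

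I do not expect any serious obstacle here; the argument is essentially bookkeeping. The one point requiring care is the arithmetic $\ceil{\tfrac{d_G(v)-1}{2}}\leq \tfrac{d_G(v)}{2}$ and the use of $d_H(v)\geq 1$, which is exactly where the hypothesis that $H$ is a connected spanning subgraph (rather than an arbitrary subgraph) enters. Everything else follows from the disjointness of the two color palettes.
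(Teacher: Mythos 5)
Your verification of the majority condition is correct and is essentially the paper's own argument: split according to whether the color belongs to the palette of $H$ or of $G-H$, and bound $d^{\alpha}_G(v)$ in each case. You in fact go one step further than the paper by justifying the inequality $\ceil{\frac{d_{G-H}(v)}{2}} \leq \frac{d_G(v)}{2}$ via $d_{G-H}(v)=d_G(v)-d_H(v)$ and $d_H(v)\geq 1$ (which is exactly where the ``connected spanning'' hypothesis enters); the paper asserts that inequality without comment. That part of your write-up is fine and your arithmetic is correct.

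The problem is the distinguishing step. You argue that a color-preserving automorphism restricts to an automorphism of $H$ and then conclude with ``but $H$ was chosen to have only the identity automorphism'' --- yet asymmetry of $H$ is not among the hypotheses of this lemma. The statement only assumes $H$ is a connected spanning subgraph carrying an $a$-coloring with $d^{\alpha}_H(v)\leq \frac{d_G(v)}{2}$, and nothing there prevents both $H$ and its coloring from being highly symmetric, in which case the combined coloring need not be distinguishing. So, read literally, the conclusion $M'_D(G)\leq a+b$ does not follow from the stated hypotheses, and your proof closes the hole only by silently importing an extra assumption. To be fair, the paper's own proof of this lemma has the same defect: it verifies only the majority condition and never addresses distinguishability at all; the distinguishing conclusion is actually obtained in the subsequent Lemma~\ref{lem:asymmetric}, where $H$ is additionally assumed to be asymmetric and the lemma here is invoked purely for the majority property. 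You have correctly identified the missing ingredient, but you should present it as an additional hypothesis (or restate the conclusion as a bound on the number of colors in a majority coloring) rather than treat it as given.
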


\begin{proof}
    If $\alpha$ is a color of $H$, then for any vertex $v$ it holds $d_G^{\alpha}(v)=d_H^{\alpha}(v) \leq \frac{d_G(v)}{2}$. 
    Otherwise, for any vertex $v$ it holds $d_G^{\alpha}(v)=d_{G-H}^{\alpha}(v) \leq \ceil{\frac{d_{G-H}(v)}{2}} \leq \frac{d_G(v)}{2}$.
\end{proof}

\begin{lemma}\label{lem:asymmetric}
    Let $H$ be a connected asymmetric spanning subgraph of a graph $G$ with $\delta(G) \geq 2$. If $H$ has an almost majority coloring with $a$ colors, then $M'_D(G) \leq a+3$. 
\end{lemma}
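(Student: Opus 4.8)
The idea is to combine the coloring of the asymmetric spanning subgraph $H$ with a majority-type coloring of the remaining edges $G-H$, using Lemma~\ref{lem:a+b} as the engine. We have $H$ connected, spanning, asymmetric, with an almost majority coloring $c_H$ using $a$ colors. We want a majority distinguishing coloring of $G$ with $a+3$ colors. The natural move is to keep $c_H$ on $E(H)$ (using color set $[a]$) and color $E(G-H)$ with $3$ fresh colors $\{a+1,a+2,a+3\}$ in such a way that the combined coloring is a majority coloring; distinguishing then comes essentially for free, because any automorphism of $G$ that preserves the combined coloring must map $E(H)$ to $E(H)$ (the color classes $[a]$ versus $\{a+1,a+2,a+3\}$ are disjoint), hence restricts to an automorphism of $H$ preserving $c_H$; since $H$ is asymmetric, that restriction is the identity on $V(H)=V(G)$, so the automorphism is the identity.

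The main work is therefore to get the majority condition at every vertex. First I would dispose of vertices incident to no edge of $G-H$: at such a vertex $v$ all incident edges lie in $H$, and the almost majority property of $c_H$ together with $\delta(G)\ge 2$ gives $d_G^\alpha(v)=d_H^\alpha(v)\le\lceil d_H(v)/2\rceil = \lceil d_G(v)/2\rceil$, which is fine unless $d_G(v)$ is odd — but $H$ spanning and $d_H(v)=d_G(v)$ forces nothing about parity, so I need to be slightly careful and instead aim to invoke Lemma~\ref{lem:a+b} directly. Lemma~\ref{lem:a+b} requires a connected spanning subgraph whose coloring satisfies $d^\alpha(v)\le d_G(v)/2$ (strict half, not ceiling) for every $v$ and every color. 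The issue is that the almost majority coloring of $H$ only guarantees $d_H^\alpha(v)\le\lceil d_H(v)/2\rceil$, and when $d_H(v)$ is odd this can equal $(d_H(v)+1)/2 > d_H(v)/2$; then if $v$ has no neighbours in $G-H$ we would have $d_G^\alpha(v) > d_G(v)/2$. I expect the resolution is that Corollary~\ref{cor:almost} (equivalently Theorem~\ref{thm:deg2}) actually produces, at each vertex, at most $\lfloor d(v)/2\rfloor$ edges of any colour when convenient, or more robustly: apply Corollary~\ref{cor:almost} to $G-H$ to get $b\le 4$ — but we want $b=3$, so we need the extra structure. Reconciling $a+3$ rather than $a+4$ is the crux.

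Here is how I would close that gap. Rather than colour $G-H$ from scratch, I would first move one carefully chosen edge (or a small set of edges) of $G-H$ into the "$H$-part" at each bad vertex, or — cleaner — observe that $G-H$ need not have minimum degree $2$, so I should apply Theorem~\ref{thm:deg2} to each component of $G-H$ that has minimum degree at least $2$ and handle low-degree vertices of $G-H$ separately. A vertex $v$ with $d_{G-H}(v)=1$ contributes one edge of a fresh colour, which is harmless as long as $d_H^\alpha(v)\le\lfloor d_G(v)/2\rfloor$; a vertex with $d_{G-H}(v)=0$ needs $c_H$ to already be majority at $v$ in $G$, i.e. $d_H^\alpha(v)\le d_G(v)/2=d_H(v)/2$. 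So the real statement I must extract from the hypothesis is: the almost majority $a$-colouring of $H$ can be taken so that every vertex of even degree in $H$ sees at most half its edges in one colour (automatic), and I must argue that vertices of odd degree in $H$ necessarily have a neighbour via $G-H$ — which is false in general, so instead I should strengthen: use $3$ colours on $G-H$ via Theorem~\ref{thm:deg2}-type arguments plus the freedom to recolour a single $H$-edge at each problematic odd-degree vertex using one of the three new colours (this keeps $H$'s colouring almost-majority and fixes the parity), and verify the new colours still form a weak majority colouring of the modified $G-H$. The bookkeeping that this local surgery does not cascade — that fixing one vertex does not break another — is the step I expect to be the main obstacle, and I would handle it by doing the surgery greedily along a spanning structure or by noting each recoloured edge has both endpoints of odd $H$-degree so the moves pair up.

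Finally, once the combined colouring $c$ on $[a+3]$ is majority, distinguishing follows as sketched: $\varphi\in\Aut(G)$ preserving $c$ preserves the partition of edges by colour, hence $\varphi(E(H))=E(H)$, so $\varphi|_{V(H)}\in\Aut(H)$ preserves $c_H$; asymmetry of $H$ gives $\varphi|_{V(H)}=\mathrm{id}$, and since $V(H)=V(G)$, $\varphi=\mathrm{id}$. Therefore $M'_D(G)\le a+3$.
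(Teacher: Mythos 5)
Your overall strategy --- keep the $a$-coloring on $H$, colour $G-H$ with fresh colours, invoke Lemma~\ref{lem:a+b} for the majority condition, and get distinguishing from asymmetry of the spanning subgraph --- is exactly the paper's proof, and your closing distinguishing argument is correct (indeed stated a little more carefully than in the paper). The problem is in the middle: the obstacle you spend most of the proposal trying to circumvent does not exist. You assert that an almost majority colouring of $H$ only guarantees $d_H^\alpha(v)\le\lceil d_H(v)/2\rceil$, but that is the definition of a \emph{weak} majority colouring. An \emph{almost} majority colouring requires, at every vertex of degree at least two, that at most half of the incident edges share a colour, i.e.\ $d_H^\alpha(v)\le d_H(v)/2\le d_G(v)/2$; and at a leaf $v$ of $H$ the single incident edge gives $d_H^\alpha(v)=1\le d_G(v)/2$ because $\delta(G)\ge 2$. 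So the hypothesis of Lemma~\ref{lem:a+b} is met by the given colouring of $H$ as it stands --- no recolouring, no parity surgery, and no need for odd-degree vertices of $H$ to have neighbours in $G-H$.

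Because of this misreading, what you submit is not yet a proof: the ``local surgery'' your plan hinges on is explicitly left open (``the step I expect to be the main obstacle''), and you never pin down how $G-H$ is coloured with exactly three colours. The one genuine issue in that direction, which you brush past, is that a component of $G-H$ may fall under case (ii) of Theorem~\ref{thm:2colors}, where the special vertex receives $d/2+1$ edges of one colour and so violates the weak majority bound; that is where the third fresh colour is needed (recolour one edge at each such special vertex), and it is why the lemma says $a+3$ rather than $a+2$. With the definition read correctly the proof is short: colour $H$ almost-majority with $a$ colours, colour each component of $G-H$ with two fresh colours via Theorem~\ref{thm:2colors} and repair the special vertices with a third, apply Lemma~\ref{lem:a+b}, and conclude distinguishing from asymmetry exactly as you do.
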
 

\begin{proof}
    We take an almost majority coloring of $H$ with $a$ colors and weak majority coloring of $G-H$ from Theorem~\ref{thm:2colors}. 
    From Lemma~\ref{lem:a+b} this coloring is majority edge coloring.
    If $\varphi$ is an automorphism of $H$ preserving this coloring, then $\varphi(x)=x$, for each $x \in V (H)$. Since $H$ is a spanning subgraph of $G$, therefore, this coloring is distinguishing. 
\end{proof}

We now introduce general results giving bounds on the majority distinguishing index.

\begin{proposition}\label{prop:deg2MD7}
    Every graph of minimum degree at least 2, which has a connected asymmetric spanning subgraph, has a majority distinguishing edge 7-coloring.
\end{proposition}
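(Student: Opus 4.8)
The plan is to combine Corollary~\ref{cor:almost} with Lemma~\ref{lem:asymmetric}. Indeed, let $G$ be a graph with $\delta(G) \geq 2$ that has a connected asymmetric spanning subgraph $H$. By Corollary~\ref{cor:almost}, the graph $H$ admits an almost majority coloring with $4$ colors. Since $H$ is a connected asymmetric spanning subgraph of $G$, Lemma~\ref{lem:asymmetric} applies with $a = 4$, yielding $M'_D(G) \leq 4 + 3 = 7$. So the proof is essentially a one-line invocation of the two preceding results.

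The only point requiring a moment's care is whether Corollary~\ref{cor:almost} genuinely applies to $H$: the corollary is stated for \emph{every} graph, with no degree hypothesis, precisely because the notion of \emph{almost} majority coloring was introduced to absorb vertices of degree one (and, implicitly, isolated vertices — though $H$ being connected and spanning a graph of minimum degree $2$ has no isolated vertices). So there is no obstacle there. One should also note that $H$ inherits the vertex set $V(H) = V(G)$, which is exactly what makes the distinguishing conclusion in Lemma~\ref{lem:asymmetric} transfer from $H$ to $G$; this is already handled inside the proof of that lemma, so nothing new is needed.

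If I wanted to make the write-up fully self-contained rather than citing Lemma~\ref{lem:asymmetric} verbatim, I would spell out the two halves: first, color $H$ with an almost majority $4$-coloring (Corollary~\ref{cor:almost}) and color $G - H$ using the $2$-coloring of Theorem~\ref{thm:2colors}, using $4 + 2 = 6$ colors in total — wait, that gives $6$, but the weak majority coloring of $G-H$ may require that its components be handled with Theorem~\ref{thm:2colors} on each component, still $2$ colors — actually Lemma~\ref{lem:asymmetric} already accounts for the correct count via Lemma~\ref{lem:a+b} with $a=4$, $b=3$; I will simply trust the stated lemma. The honest shortest route is: apply Corollary~\ref{cor:almost} to $H$, then apply Lemma~\ref{lem:asymmetric}.

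I do not foresee a genuine \textbf{main obstacle} here; the conceptual work has all been front-loaded into Corollary~\ref{cor:almost} and Lemmas~\ref{lem:a+b} and~\ref{lem:asymmetric}. If anything, the ``hard part'' is purely bookkeeping: making sure the degree bound $d^{\alpha}_H(v) \leq \frac{d_G(v)}{2}$ required by Lemma~\ref{lem:a+b} is met, which for $H$ with an almost majority coloring gives $d^{\alpha}_H(v) \leq \lceil d_H(v)/2 \rceil$, and since $d_H(v) \leq d_G(v)$ and $d_G(v) \geq 2$ one needs $\lceil d_H(v)/2\rceil \leq d_G(v)/2$; this can fail when $d_H(v)$ is odd and $d_H(v) = d_G(v)$, i.e.\ when all edges at $v$ lie in $H$. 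This subtlety is precisely why Lemma~\ref{lem:asymmetric} is phrased in terms of \emph{almost majority coloring of $H$} together with the specific Theorem~\ref{thm:2colors} coloring of $G-H$ rather than a naive sum, and the combined coloring does work out — so in the write-up I will just cite Lemma~\ref{lem:asymmetric} and let that lemma carry the weight.
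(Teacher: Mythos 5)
Your proposal is correct and follows exactly the paper's own argument: take the connected asymmetric spanning subgraph $H$, give it an almost majority $4$-coloring via Corollary~\ref{cor:almost}, and conclude $M'_D(G)\leq 4+3=7$ from Lemma~\ref{lem:asymmetric}. The side remarks about the degree bookkeeping do not change the argument, which matches the paper's proof.
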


\begin{proof}    
     Let $H$ be a connected asymmetric spanning subgraph of $G$. By Corollary~\ref{cor:almost} there exists an almost majority edge coloring of $H$ with four colors. The result follows directly from  Lemma~\ref{lem:asymmetric}. 
\end{proof}

If the asymmetric subgraph is of Class 1, we need fewer colors. From the proof of Proposition~\ref{prop:deg2MD7} and Theorem~\ref{thm:deg4} we make a simple corollary.

\begin{corollary}
    Every graph of minimum degree at least 4, which has a connected asymmetric spanning subgraph of minimum degree at least 4, has a majority distinguishing edge 6-coloring.
\end{corollary}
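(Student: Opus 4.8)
The plan is to follow the proof of Proposition~\ref{prop:deg2MD7} almost verbatim, but to feed into Lemma~\ref{lem:asymmetric} a coloring of the asymmetric subgraph that uses only three colors instead of four.

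First I would fix a connected asymmetric spanning subgraph $H$ of $G$ with $\delta(H)\geq 4$, which exists by hypothesis. Since $\delta(H)\geq 4$, Theorem~\ref{thm:deg4} provides a majority edge $3$-coloring of $H$; as $H$ has no vertex of degree one, this coloring is in particular an almost majority coloring of $H$ with $a=3$ colors.

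Next I would invoke Lemma~\ref{lem:asymmetric}: its hypotheses are met because $\delta(G)\geq 4\geq 2$, $H$ is a connected asymmetric spanning subgraph of $G$, and $H$ carries an almost majority coloring with $3$ colors. The lemma then gives $M'_D(G)\leq a+3 = 6$, which is the assertion.

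There is essentially no obstacle here: the only point worth stating explicitly is that a majority coloring of $H$ automatically qualifies as an almost majority coloring, which is immediate since ``almost majority'' only relaxes the constraint at degree-one vertices and $H$ (having $\delta(H)\geq 4$) has none. Thus the corollary is a direct specialization of the machinery already assembled, exactly as the sentence preceding its statement announces.
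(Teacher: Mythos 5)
Your proposal is correct and is exactly the argument the paper intends: the sentence preceding the corollary points to Theorem~\ref{thm:deg4} for a majority edge $3$-coloring of the asymmetric spanning subgraph $H$ (valid since $\delta(H)\geq 4$), which is then fed into Lemma~\ref{lem:asymmetric} to give $M'_D(G)\leq 3+3=6$. Your explicit remark that a majority coloring of $H$ is in particular an almost majority coloring is a correct and worthwhile clarification of a step the paper leaves implicit.
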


We believe that the bound in Proposition~\ref{prop:deg2MD7} is not sharp. Therefore, we state the following conjecture.

\begin{conjecture}
    Every graph of minimum degree at least 2, which has a connected asymmetric spanning subgraph, has a majority distinguishing edge 5-coloring.
\end{conjecture}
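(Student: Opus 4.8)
The plan is to mimic the strategy behind Proposition~\ref{prop:deg2MD7}, namely to split the edge set of $G$ into an asymmetric spanning subgraph $H$ together with its complement $G-H$, color each part separately, and then argue via Lemma~\ref{lem:a+b} that the union is a majority distinguishing coloring. The arithmetic of Lemma~\ref{lem:a+b} says that if $H$ carries an $a$-coloring with $d_H^\alpha(v)\le \tfrac{d_G(v)}{2}$ for all $v$ and all colors, and $G-H$ admits a weak majority coloring with $b$ colors, then $M'_D(G)\le a+b$; so to reach $5$ we must make $a+b\le 5$. Proposition~\ref{prop:deg2MD7} wastes colors by taking $a=4$ (the generic almost majority bound for $H$) and $b=3$. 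The idea is that one should not have to pay the full cost for \emph{both} summands: we should be able to choose $H$ so sparse that it can be colored with very few colors while remaining asymmetric and still spanning.

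First I would try to prove that every graph with $\delta(G)\ge 2$ that has a connected asymmetric spanning subgraph in fact has one that is very thin --- ideally a spanning tree, or a spanning tree with one extra edge --- and moreover asymmetric. A spanning tree $T$ is the natural candidate: it is connected, spanning, has exactly $n-1$ edges, and one can hope to 2-color it so that it is both distinguishing (as a subgraph of $G$) and satisfies $d_T^\alpha(v)\le\tfrac{d_G(v)}{2}$. The point is that since $\delta(G)\ge 2$, most vertices of $G$ have degree $\ge 2$, so $\tfrac{d_G(v)}{2}\ge 1$, giving a little slack; leaves of $T$ are the dangerous vertices but as long as they have $d_G$-degree $\ge 2$ the single tree-edge at them uses at most $1\le \tfrac{d_G(v)}{2}$ of its budget. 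If $T$ can be 2-colored asymmetrically with this degree constraint, then $a=2$, and pairing with the $3$-color weak majority coloring of $G-T$ (or better, a $2$-color one) would give $M'_D(G)\le 5$. Alternatively, if a tree is too rigid, one takes a sparse spanning subgraph $H$ with $\delta(H)\le 2$-ish and uses Theorem~\ref{thm:deg2}-style ideas with $a=3$, $b=2$.

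The main obstacle, and the place where the argument is most delicate, is simultaneously (i) guaranteeing that the chosen sparse spanning subgraph $H$ can be made asymmetric by an edge coloring with only $a$ colors, and (ii) respecting the hard constraint $d_H^\alpha(v)\le \tfrac{d_G(v)}{2}$ rather than merely $\le\ceil{d_H(v)/2}$. The degree-$2$ vertices of $G$ are the pinch points: at such a vertex $v$, $\tfrac{d_G(v)}{2}=1$, so \emph{both} edges of $G$ at $v$ must receive distinct colors, and if both of them happen to lie in $H$ (forced when $v$ has degree $2$ in $H$ as well), then the two tree/subgraph edges at $v$ must get different colors --- this interacts with the distinguishing requirement in a way that may force more than $2$ colors locally. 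Handling long paths and cycles of degree-$2$ vertices, where such local constraints propagate, is exactly the combinatorial heart: one would break symmetry on these stretches using the limited palette by a careful parity/pattern argument (similar in spirit to the $C_3,C_4,C_5$ exceptions in Theorem~\ref{thm:D'}), and verify that small sporadic cases do not require a sixth color. I expect that proving this cleanly for all such graphs is genuinely hard, which is precisely why it is stated as a conjecture rather than a theorem.
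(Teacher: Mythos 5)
This statement is a \emph{conjecture} in the paper: the authors give no proof of it, so there is no argument of theirs to compare yours against. Your proposal is a research plan rather than a proof, and you concede as much in your closing sentence. It is worth being precise about where the plan breaks down, because the obstructions you run into are exactly the ones that presumably led the authors to leave the statement open rather than prove it.

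First, the reduction to a spanning tree is unjustified. The hypothesis supplies only \emph{some} connected asymmetric spanning subgraph $H$; it does not follow that $G$ has an asymmetric spanning tree (the smallest asymmetric tree has seven vertices and a specific shape, and a graph can possess an asymmetric spanning subgraph with cycles while every one of its spanning trees is symmetric). Replacing ``asymmetric tree'' by ``tree admitting a $2$-coloring preserved by no nontrivial automorphism of $G$'' does not rescue the plan, since nothing in the hypothesis guarantees such a tree exists either. Second, and more fundamentally, the arithmetic $a+b\le 5$ collides with the hypothesis of Lemma~\ref{lem:a+b}: the subgraph $H$ must be colored so that $d_H^{\alpha}(v)\le \tfrac{d_G(v)}{2}$ for every vertex and color. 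At a vertex with $d_G(v)=3$ all of whose edges lie in $H$, this means each color may appear at most once, forcing three distinct colors there and ruling out $a=2$; more generally, as the paper notes after Corollary~\ref{cor:almost}, $a=2$ is available only when all non-leaf degrees of $H$ are even, and without such parity assumptions one cannot in general beat the $a=4$ of Corollary~\ref{cor:almost} --- which is precisely why Proposition~\ref{prop:deg2MD7} lands at $7$. You correctly identify the degree-$2$ vertices as pinch points, but you offer no mechanism for handling odd degrees, nor for achieving the distinguishing property with the reduced palette; these are the missing ideas, not routine verifications. In short, the framework you describe is the same decomposition the paper already uses to obtain the bound $7$, and the step that would bring the count down to $5$ is exactly the part left unproved.
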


\section{Majority distinguishing edge coloring}\label{sec:majority}
We start this section by showing that there exist graphs with a majority distinguishing $2$-edge coloring. Obviously, such graphs are Eulerian with an even number of edges. We observe that every graph with all vertices of even degree and the identity automorphism group has majority distinguishing 2-edge coloring. Examples of families of such graphs are listed below.

\begin{itemize}
    \item Let $G_k$ be a graph constructed from a cycle $C_{2k}$ with vertex set $\{v_1, v_2, \ldots, v_{2k}\}$ by adding a path of length $l_i$ between the vertices $v_{i}$ and $v_{i+1}$, for $i \in \{1,2, \ldots , 2k\}$. We require that the sum of lengths of added paths is even and that the vertex coloring of $C_{2k}$ obtained by taking the lengths of added paths as the colors is distinguishing. Note that the degrees of vertices of $G_k$ are in the set $\{2,4\}$, that $G_k$ is traceable and asymmetric. 
    \item Let $G'_k$ be a graph constructed in similar way as previously, with one exception. Namely, we add $k$ paths, the $i$-th path is added between the vertices $v_{2i}$ and $v_{2i+2}$. The graph $G'_k$ is also asymmetric but no longer traceable.
    \item Let $C$ be a graph obtained by choosing an odd number $i \geq 3$ of cycles from the set $\{C_3, C_4, C_4, C_5, C_6, C_6, C_7, C_8, C_8, \ldots \}$ and gluing them together in one edge of each cycle. We call this edge the central edge of $C$. This graph has an Eulerian trail. If we have the odd number of cycles of odd length, we subdivide the central edge. We color all edges alternately with two colors starting from the shortest cycle and continuing with the increasing length. (Of course we go only once through the central edge.) This technique of coloring ensures that we distinguish cycles of the same length. Note that the graph $C$ is traceable if and only if it has even number of cycles of odd length and its automorphism group is arbitrary.
    \item Let $C'$ be a graph obtained from the set of distinct cycles of even length by choosing any number of cycles and gluing them together in one vertex. It is a traceable graph with arbitrary automorphism group.
\end{itemize}


Presented families of graphs are not the only ones with a majority distinguishing 2-edge coloring. We find it hard to characterize all graphs with such property. 

\vspace{0.5cm}



We consider basic classes of graphs and show sharp upper bounds on the majority distinguishing index.

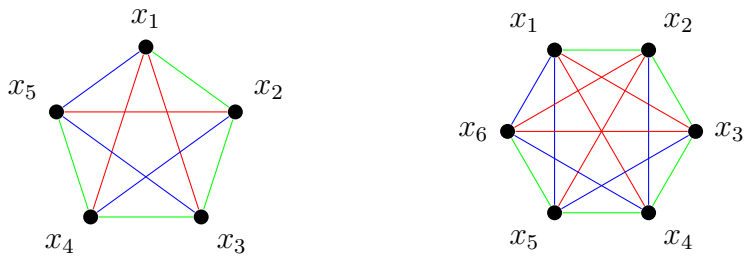
\begin{figure}[ht]
\hspace{1cm}
\begin{minipage}{.4\textwidth}
\begin{tikzpicture}[scale=.25] 
\def\radius{5}
\def\bender{0}

  \foreach \v/\lab in {1/{$x_1$}, 2/{$x_2$}, 3/{$x_3$}, 4/{$x_4$}, 5/{$x_5$}}{
    \node[circle,fill,inner sep=2pt, label={162-\v*72:\lab}] (n\v) at (162-\v*72:\radius cm) {};
  } 
  \foreach \v/\vnext/\col in {1/2/green, 2/3/green, 3/4/green, 4/5/green, 1/3/red, 1/4/red, 1/5/blue, 2/4/blue, 2/5/red, 3/5/blue}{
    \draw [\col] (n\v) to[bend right=\bender] (n\vnext); 
  }
\end{tikzpicture}
\centering
\end{minipage}
\hspace{1cm}
\begin{minipage}{.35\textwidth}
\begin{tikzpicture}[scale=.25] 
\def\radius{5}
\def\bender{0}

  \foreach \v/\lab in {1/{$x_1$}, 2/{$x_2$}, 3/{$x_3$}, 4/{$x_4$}, 5/{$x_5$}, 6/{$x_6$}}{
    \node[circle,fill,inner sep=2pt, label={180-\v*60:\lab}] (n\v) at (180-\v*60:\radius cm) {};
  } 
  \foreach \v/\vnext/\col in {1/6/blue, 1/2/green, 2/3/green, 3/4/green, 4/5/green, 5/6/green, 1/3/red, 1/4/red, 1/5/blue, 2/4/blue, 2/5/red, 2/6/red, 3/5/blue, 3/6/red, 4/6/blue}{
    \draw [\col] (n\v) to[bend right=\bender] (n\vnext); 
  }
\end{tikzpicture}  
\end{minipage}
\caption{Majority distinguishing coloring of $K_5$ and $K_6$.}\label{fig:K_5,K_6}
\end{figure}

\begin{proposition}\label{prop:complete} 
    For $n \geq 3$ we have $$M'_D(K_{n}) \leq 3,$$ except for $n=4$, where $M'_D(K_{4}) = 5$.
\end{proposition}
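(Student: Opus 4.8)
The plan is to handle the small exceptional case $n=4$ separately and then give a uniform construction for all $n \geq 3$ with $n \neq 4$, with the sporadic cases $n=3,5,6$ checked by hand (the colorings for $K_5$ and $K_6$ being exactly those drawn in Figure~\ref{fig:K_5,K_6}). For $K_4$, since $\Delta(K_4)=3$, every majority coloring is a proper edge coloring (as noted in the introduction), so $M'_D(K_4)=\chi'_D(K_4)$; one then argues directly that $K_4$ needs $5$ colors for a proper distinguishing edge coloring — with $3$ or $4$ colors on a proper $3$-edge-coloring (a perfect matching decomposition) the rich automorphism group $S_4$ cannot be killed, so a short case analysis gives the lower bound $5$, and an explicit $5$-coloring gives the upper bound. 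For $n=3$, $M'_D(K_3)\leq 3$ is immediate (color the three edges with three distinct colors; this is majority since each vertex has degree $2$, and it is distinguishing).

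For the main range $n \geq 5$ (with $n=5,6$ done explicitly), I would build a majority $3$-edge-coloring of $K_n$ from a Hamilton path together with a known decomposition. The cleanest route: use that $K_n$ is traceable, take a Hamilton path $P$ on $x_1,\dots,x_n$, and use $P$ as (part of) an asymmetric-type skeleton to break symmetry by assigning colors along $P$ in a non-palindromic pattern, while the remaining edges $K_n - P$ are colored so that the combined coloring stays majority at every vertex. Concretely, one wants each color class to occupy at most $\lceil d(v)/2\rceil = \lceil (n-1)/2 \rceil$ edges at each vertex $v$; since we have $3$ colors and $n-1$ edges per vertex, a roughly balanced partition into three nearly equal parts at each vertex automatically satisfies the majority condition once $n-1 \geq 3$, i.e. $n \geq 4$. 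So the real content is (i) producing a proper-enough near-equitable $3$-coloring of $K_n$ at every vertex, and (ii) perturbing it on a bounded number of edges near one end of the Hamilton path to destroy all automorphisms without breaking near-equitability. Step (i) can be obtained from a $1$-factorization (or near-$1$-factorization) of $K_n$: group the $n-1$ (or $n$) perfect matchings into three bunches of nearly equal size, getting $d^\alpha(v)$ within $1$ of $(n-1)/3$ for all $v,\alpha$.

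For step (ii), the distinguishing perturbation, I would fix a reference vertex $x_1$ and recolor a small, carefully chosen set of edges incident to the first few vertices $x_1,x_2,x_3,\dots$ so that the color-degree sequence seen from $x_1$, and then iteratively the colored link of $x_1$, pins down each vertex individually — exploiting that any color-preserving automorphism must fix the unique vertex with an anomalous colored-neighborhood signature, then fix its neighbors by their signatures relative to $x_1$, and so on down the Hamilton path. One must check that each such local recoloring changes $d^\alpha(v)$ by at most $1$ and that there is enough slack (which there is, since for $n\geq 7$ the matchings-bunching gives two free units of slack, and the sporadic small cases are handled by the explicit figures). The main obstacle I expect is exactly this simultaneous bookkeeping: keeping the $3$-coloring majority at every vertex (a global constraint, tight-ish for small $n$) while introducing enough local irregularity to kill the highly symmetric automorphism group of $K_n$ — this is why $n=4,5,6$ fall out as separate small cases and why the general argument needs the $n-1$ edges per vertex to be comfortably larger than $3$.
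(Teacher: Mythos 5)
Your treatment of the small cases matches the paper's: $n=3,4$ reduce to proper distinguishing edge colorings (the paper simply cites Theorem~\ref{thm:KP15} for the values $3$ and $5$ rather than redoing the case analysis for $K_4$), and $n=5,6$ are the explicit colorings of Figure~\ref{fig:K_5,K_6}. The problem is the main range $n\geq 7$, where your argument has a genuine gap. Your step (ii) --- perturbing a near-equitable $3$-coloring on ``a bounded number of edges near one end of the Hamilton path'' so that colored-neighborhood signatures pin down every vertex --- is exactly the hard part of the statement, and it is only announced, not carried out. With three colors the edges at $x_1$ partition its $n-1$ neighbors into only three classes, so no bounded local perturbation can separate all of them; the signature argument would have to propagate through the whole graph, and then your claim that each vertex's color-degrees move by at most the two units of slack provided by the bunched $1$-factorization is no longer a local check but a global one that you have not performed. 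You yourself flag this ``simultaneous bookkeeping'' as the main obstacle; as written it is a plan, not a proof.

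The paper avoids this entanglement by decoupling the two constraints. For $n\geq 7$ it takes an \emph{asymmetric spanning tree} $T$ of $K_n$ with $\Delta(T)=3$ and colors all of $E(T)$ green; since $d^{\mathrm{green}}(v)=d_T(v)\leq 3\leq\frac{n-1}{2}$, the majority condition for green is automatic, and Theorem~\ref{thm:2colors} supplies a weak majority $2$-coloring of $K_n-T$ with red and blue, so Lemma~\ref{lem:a+b} yields a majority $3$-coloring. Distinguishing is then immediate: any color-preserving automorphism of $K_n$ preserves the green color class, i.e.\ restricts to an automorphism of the asymmetric tree $T$, hence is the identity. If you want to salvage your approach, the cleanest fix is to replace your perturbation step by this idea --- make one entire color class an asymmetric spanning subgraph of small maximum degree --- rather than trying to break the full symmetric group of $K_n$ by local recolorings of an equitable coloring.
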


\begin{proof}
    For $n \geq 7$ we take an asymmetric spanning tree $T$ of $K_{n}$ with $\Delta(T)=3$ and color it with green. By Lemma~\ref{lem:asymmetric} there exists a majority distinguishing coloring with three colors.


    For $n \in \{3,4\}$, the majority distinguishing edge coloring is proper distinguishing coloring, hence from Theorem~\ref{thm:KP15} it holds $M'_D(K_{3}) = 3$ and $M'_D(K_{4}) = 5.$

    For $n \in \{5,6\}$ we take a spanning path $P=x_1x_2 \ldots x_n$ of $K_{n}$ and color it with green. We distinguish the ends of $P$ (final vertices: $x_1$ and $x_n$ or penultimates: $x_2$ and $x_{n-1}$) by the following coloring of the edges incident with them. In $K_{5}$ we color red the edges $x_1x_3,x_1x_4,x_2x_5$ and blue $x_1x_5, x_2x_4$, $x_3x_5$. In $K_{6}$ we color red the edges $x_1x_3,x_1x_4,x_2x_5,x_2x_6$, $x_3x_6$ and blue $x_1x_5, x_1x_6, x_2x_4$, $x_3x_5$, $x_4x_6$ (see Figure~\ref{fig:K_5,K_6}). This coloring is majority because for every vertex at most two incident edges have the same color. 
\end{proof}

From Theorem~\ref{thm:2colors} the bound from Proposition~\ref{prop:complete} is optimal for even $n$ and $n=4k+3$. The only possibility for $M'_D(K_{4k+1})=2$, where $k \in \mathbb{N}$, is if there exists an asymmetric $2k$-regular graph $H$ with $4k+1$ vertices. By~\cite{KSV} for $k$ big enough these graphs exist.

\begin{proposition}\label{prop:traceabe}
    If $G$ is a traceable graph and $\delta(G) \geq 4$, then $M'_D(G) \leq 3$.
\end{proposition}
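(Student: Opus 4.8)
Since $G$ is traceable with $\delta(G)\ge 4$ it has at least $5$ vertices; fix a Hamiltonian path $P=x_1x_2\dots x_n$ of $G$. The plan is to use $P$ as a rigid mono‑chromatic skeleton and to colour the remainder with the two remaining colours. Colour every edge of $P$ with colour~$3$ (``green''). For an internal vertex $x_i$ we have $d_{G-P}(x_i)=d_G(x_i)-2\ge 2$, and for the endpoints $d_{G-P}(x_1),d_{G-P}(x_n)\ge 3$, so $\delta(G-P)\ge 2$ and in particular every connected component of $G-P$ has at least three vertices. Note also that $d^{3}_P(v)=d_P(v)\le 2\le \tfrac{d_G(v)}{2}$ for every~$v$, so the green skeleton already satisfies the hypothesis of Lemma~\ref{lem:a+b} with $a=1$.

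Next I colour $G-P$ with colours $1$ and $2$ by applying Theorem~\ref{thm:2colors} to each component $C$ of $G-P$ separately. If $C$ falls under case~(i) we take the resulting $2$‑colouring directly; if $C$ falls under case~(ii) (odd number of edges, all degrees even) we are free to choose the special vertex, and we choose it to be an internal vertex of $P$, which is possible because $C$ has at least three vertices and hence contains a vertex different from $x_1$ and $x_n$. A short verification — essentially the computation in the proof of Lemma~\ref{lem:a+b} — shows the combined colouring is a majority colouring of $G$: for a green edge the bound $d^{3}_G(v)=d_P(v)\le\tfrac{d_G(v)}2$ holds; for colours $1,2$ at a non‑special vertex $v$ we get $d^{\alpha}_G(v)=d^{\alpha}_{G-P}(v)\le\lceil d_{G-P}(v)/2\rceil\le\tfrac{d_G(v)}2$ (using $d_{G-P}(v)\le d_G(v)-1$); and at the special internal vertex $v$ the ``$+1$'' defect is absorbed, since $d^{\alpha}_G(v)=d_{G-P}(v)/2+1=(d_G(v)-2)/2+1=\tfrac{d_G(v)}2$.

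For the distinguishing property, observe that the green colour class is exactly $E(P)$, so any colour‑preserving automorphism $\varphi$ of $G$ must map $E(P)$ onto itself. Since the two endpoints $x_1,x_n$ are the only vertices of degree $1$ in $P$, $\varphi$ either fixes both of them — in which case, $P$ being a spanning path, $\varphi$ is the identity — or it swaps them, in which case $\varphi$ acts on $V(G)$ as the reflection $\sigma\colon x_i\mapsto x_{n+1-i}$. Hence, if $\sigma\notin\Aut(G)$, or if $\sigma$ does not preserve the colouring of $G-P$, the colouring is already distinguishing and $M'_D(G)\le 3$.

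It remains to treat the case $\sigma\in\Aut(G)$ together with a $\sigma$‑symmetric $2$‑colouring of $G-P$; this is the main obstacle. Here $d_G(x_1)=d_G(x_n)=:d\ge 4$, and $x_1$ has at least three non‑path neighbours $x_k$, of which at most one is $x_n$; pick such a neighbour with $k\notin\{2,n\}$, so that the non‑path edge $e=x_1x_k$ satisfies $\sigma(e)=x_nx_{\,n+1-k}\neq e$. Because $x_1$ carries exactly one green edge and at least three edges of colours $1,2$, the majority condition at $x_1$ leaves slack: the multiset of colours at $x_1$ can be made to be $\{3,1,1,2,\dots\}$ with two edges of colour~$1$, while at $x_n$ it is forced (after a symmetric choice) to have two edges of colour~$2$; no automorphism sending $x_1$ to $x_n$ can respect this. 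The technical point is to re‑establish that $G-P$ admits a $2$‑colouring with weak‑majority bounds everywhere and this prescribed local imbalance at $x_1$ and $x_n$ — a routine variant of Theorem~\ref{thm:2colors} obtained by its Eulerian‑orientation argument applied to $G-P$ with the orientation at $x_1,x_n$ pre‑biased — after which the combined colouring is still majority (by the same verification as above) and $\sigma$ no longer preserves it. This yields $M'_D(G)\le 3$; sharpness follows from the fact that, for instance, complete bipartite graphs $K_{r,r}$ with $r\ge4$ are traceable, $4$‑regular-or-more, and need more than $2$ colours by Theorem~\ref{thm:2colors}.
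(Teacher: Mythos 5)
Your proposal follows essentially the same route as the paper: colour the Hamiltonian path green, two-colour $G-P$ with red and blue via Theorem~\ref{thm:2colors} (special vertex chosen internal to the path), and break the only possible colour-preserving automorphism, the reflection $x_i\mapsto x_{n+1-i}$, by forcing a red/blue asymmetry at the endpoints. The sole difference is that at this last step the paper explicitly pre-colours a mirror pair $x_1v_i$ red and $v_{i'}x_n$ blue (plus a compensating oppositely coloured pair when $d_G(x_1)$ is odd) instead of appealing to a ``pre-biased'' variant of Theorem~\ref{thm:2colors}; your argument is correct in substance, but that is the one place where the explicit edge prescription of the paper is cleaner than your deferred claim.
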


\begin{proof}
    Let $P_n=\{x_1, \ldots, x_n\}$ be the spanning path of the graph $G$. We consider two cases based on whether $P_n$ has non-trivial automorphism in $G$. We color the edges of $P_n$ green, and next we color the remaining edges of $G$ with two colors: red and blue.

    First, assume that there exists an automorphism of the path $P_n$ in $G$. Then $d_G(x_1) = d_G(x_n)$ and for any vertex $v_i \in V(G)$, there exists $x_1v_i \in E(G)$ if and only if $v_{i'}x_n \in E(G)$, where $i'=n-i+1$. We color $x_1v_i$ red and $v_{i'}x_n$ blue for the smallest $i>2$. 
    If $d_G(x_1)$ is odd, then for $j \neq i'$, $j<n$ and $x_1v_j$ we color also $x_1v_j$ blue and $v_{j'}x_n$ red. Now, there does not exists any automorphism of G exchanging $v_1$ with $v_n$. By Theorem~\ref{thm:2colors} we color the edges of every connected component of $G - P_n$ with red and blue, such that the special vertex is different from $x_1$ and $x_n$. This can always be done because $\delta(G) \geq 4$.  
    
    This coloring is a majority coloring, because for $x_1$ and $x_n$ at most half of the incident edges have the same color, and for every other vertex $v \in V(G)$ at most $\ceil{\frac{d_{G-P_n}(v)+2}{2}} \leq \frac{d_{G}(v)}{2}$ of the edges incident with $v$ have the same color. Since the color of $x_1v_i$ is different from the color of $x_n v_{i'}$, then the only automorphism of $G$ that preserves this coloring is the identity automorphism. Therefore, this coloring is distinguishing.

    Otherwise, the path is already fixed and by Theorem~\ref{thm:2colors} we color the remaining edges with red and blue as previously.
\end{proof}

\begin{corollary}
    For $n \geq 4$ we have $$M'_D(K_{n,n}) \leq 3.$$ 
\end{corollary}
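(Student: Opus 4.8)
The plan is to deduce this from Proposition~\ref{prop:traceabe}, since $K_{n,n}$ is $n$-regular and hence has minimum degree $n \geq 4$, so it suffices to show that $K_{n,n}$ is traceable. A Hamiltonian path (indeed a Hamiltonian cycle) in $K_{n,n}$ is immediate: label the two parts $\{a_1, \ldots, a_n\}$ and $\{b_1, \ldots, b_n\}$ and take the path $a_1 b_1 a_2 b_2 \cdots a_n b_n$, which alternates between the parts and visits every vertex exactly once. Thus $K_{n,n}$ is traceable, $\delta(K_{n,n}) = n \geq 4$, and Proposition~\ref{prop:traceabe} gives $M'_D(K_{n,n}) \leq 3$.

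The only thing to double-check is that Proposition~\ref{prop:traceabe} applies verbatim: it requires $\delta(G) \geq 4$ and a spanning path, both of which hold here, so no modification of its proof is needed. There is no genuine obstacle in this corollary; it is a direct specialization. If one wanted a self-contained argument one could instead invoke Lemma~\ref{lem:asymmetric} together with an asymmetric spanning tree of $K_{n,n}$ of bounded maximum degree, but routing through Proposition~\ref{prop:traceabe} is cleaner and keeps the color count at three.

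I would therefore write the proof in one or two sentences: exhibit the alternating Hamiltonian path, note $\delta(K_{n,n}) = n \geq 4$, and apply Proposition~\ref{prop:traceabe}.
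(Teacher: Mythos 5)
Your proof is correct and follows exactly the paper's route: the paper also states the corollary is an immediate consequence of Proposition~\ref{prop:traceabe}, using that $K_{n,n}$ is traceable and has minimum degree $n \geq 4$. Your explicit alternating Hamiltonian path just fills in the obvious detail the paper leaves implicit.
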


The proof is an immediate conclusion from Proposition~\ref{prop:traceabe}.




\vspace{0.5cm}
The difference between the number of colors needed in majority and majority distinguishing coloring can be arbitrary large. For example, $M'(K_{2,n}) \leq 3$ and Proposition~\ref{prop:K2n} shows that $M'_D(K_{2,n}) \approx \sqrt{n}$.

\begin{proposition}\label{prop:K2n}
    For $n\geq3$ we have $$M'_D(K_{2,n}) \leq \min\{k \in {\mathbb{N}}: k(k-1)-1 \geq n\}.$$
\end{proposition}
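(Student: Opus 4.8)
The plan is to construct an explicit majority distinguishing coloring of $K_{2,n}$ with $k = \min\{k \in \mathbb{N} : k(k-1) - 1 \geq n\}$ colors. Let me denote the two vertices in the small side as $a, b$ and the $n$ vertices in the large side as $v_1, \ldots, v_n$; each $v_j$ has degree $2$, with edges $av_j$ and $bv_j$, while $a$ and $b$ each have degree $n$. For a vertex of degree $2$, the majority condition simply says its two incident edges must receive different colors; so I would assign to each $v_j$ an \emph{ordered pair} $(c(av_j), c(bv_j))$ of distinct colors from $[k]$. There are exactly $k(k-1)$ such ordered pairs. An automorphism of $K_{2,n}$ either fixes or swaps $a$ and $b$, and permutes the $v_j$'s arbitrarily among vertices with the ``same role.'' A coloring is distinguishing precisely when (i) all the ordered pairs assigned to the $v_j$'s are \emph{distinct} (this kills every non-identity permutation of the $v_j$'s that fixes $\{a,b\}$ pointwise and the swap combined with a permutation), and (ii) the assignment is not invariant under the coordinate-swap involution on pairs, i.e. there is no permutation $\sigma$ of $[n]$ with $(c(bv_j), c(av_j)) = (c(av_{\sigma(j)}), c(bv_{\sigma(j)}))$ for all $j$ — equivalently, the \emph{set} of used pairs is not closed under swapping coordinates, or it is closed but then matched up asymmetrically. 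The cleanest route: choose $n$ distinct ordered pairs such that the multiset is not symmetric under the swap $(\alpha,\beta)\mapsto(\beta,\alpha)$.

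Concretely, I would argue as follows. Since $k(k-1) - 1 \geq n$, we can select $n$ distinct ordered pairs from the $k(k-1)$ available; I want to do this so that the set $S$ of chosen pairs satisfies $S \neq \{(\beta,\alpha) : (\alpha,\beta) \in S\}$. This is easy: the $k(k-1)$ ordered pairs split into $\binom{k}{2}$ swap-orbits of size $2$. Pick one full orbit, say $\{(1,2),(2,1)\}$, then discard $(2,1)$ and instead include some other pair, and fill the rest arbitrarily up to $n$ pairs — since $n \leq k(k-1)-1$ we have room to omit at least one pair, so we can always arrange that $S$ contains $(1,2)$ but not $(2,1)$ (for $k \geq 2$, which holds since $n \geq 3$ forces $k \geq 3$). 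Then no automorphism swapping $a$ and $b$ can preserve the coloring, because such an automorphism would have to send the pair $(1,2)$ to a used pair of the form $(2,1)$, which is not in $S$. And since all pairs in $S$ are distinct, no non-trivial automorphism fixing $\{a,b\}$ pointwise preserves the coloring either. Finally the majority condition: each $v_j$ gets two distinct colors, so it is fine; for $a$, the colors $c(av_j)$ need not be balanced, but wait — we DO need at most $\lceil n/2 \rceil$ edges of each color at $a$ and at $b$.

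So the genuinely delicate point — and the step I expect to be the main obstacle — is making the coloring simultaneously distinguishing \emph{and} majority at the two high-degree vertices $a$ and $b$: among the $n$ pairs we choose, no color may appear as the first coordinate more than $\lceil n/2\rceil$ times (the constraint at $a$) and likewise no color as the second coordinate more than $\lceil n/2 \rceil$ times (at $b$). I would handle this by spreading the choices: using the $k(k-1)$ pairs, arrange them (say in a cyclic / Latin-square-like order on the ``first coordinate'' values $1, 2, \ldots, k, 1, 2, \ldots$) so that each color class at $a$ and at $b$ has size $\lceil n/k \rceil$ or $\lfloor n/k \rfloor$, which is at most $\lceil n/2 \rceil$ as soon as $k \geq 2$. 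One must check this balanced selection still leaves enough freedom to break the $a\leftrightarrow b$ symmetry as above; since we only need to perturb one pair out of $n \geq 3$, and the imbalance this causes in any color class is at most $1$, the bound $\lceil n/2\rceil$ is not threatened. Assembling these pieces — balanced degree-counts at $a,b$, distinctness of pairs, and asymmetry under coordinate swap — gives a majority distinguishing $k$-coloring, proving $M'_D(K_{2,n}) \leq k$.
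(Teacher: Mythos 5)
Your proposal is essentially the paper's own proof: assign to each degree-two vertex a distinct ordered pair of distinct colors (giving the $k(k-1)$ count), break the $a\leftrightarrow b$ swap by using $(1,2)$ but omitting $(2,1)$ so the pair set is not closed under coordinate swap, and balance the first and second coordinates to keep the majority condition at $a$ and $b$. One small quibble: the paper's majority condition is ``at most half,'' i.e.\ at most $\lfloor n/2\rfloor$ per color at $a$ and $b$ (not $\lceil n/2\rceil$), so for small $n$ the ``perturb one pair'' step needs the slightly more careful balanced selection you sketch rather than the crude bound you state--but the paper is no more explicit on this point than you are.
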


\begin{proof}
    Let $|X|=2$ and $|Y|=n$. In the beginning we assume that the vertices of~$X$ are fixed and we choose any order of them. For every vertex in $v \in Y$ we assign a pair which corresponds to the colors of the edges from $v$ to consecutive vertices in $X$. The coloring we wish to obtain is a majority coloring, hence each color can be used at most once in any pair. Therefore, there are $k(k-1)$ such pairs, where $k$ is the number of colors. We distinguished the vertices in~$Y$.

    In the beginning we assumed that the vertices of $X$ are fixed. We make sure that is the case by assigning them different sets of colors, which are colors on their incident edges. The only thing we can do is choosing the sequences for $Y$. Notice, that if $n \geq 3$, the $k$ is always at least $3$. Additionally, if we use all possible pairs for the vertices of $Y$, then both vertices in $X$ will have the same sets consisting of all the colors appearing $\frac{n}{k}$ times.
    Therefore, to distinguish the vertices in $X$, we remove the pair $(1,2)$ from the possible pairs assigned to the vertices of $Y$ and we always use the pair $(2,1)$ the vertices in $X$ we distinguish by missing colors. 

    Choosing the sequences for vertices in $Y$ we have to do it in a balanced way, in order to have the majority coloring in the vertices of $X$. 

\end{proof} 

\section{Main result}\label{sec:main}

\begin{theorem}\label{thm:main}
    If $G$ is a connected graph with minimum degree $\delta \geq 2$, then $$ M'_D(G) \leq \ceil{\sqrt{\Delta(G)}}+5. $$
\end{theorem}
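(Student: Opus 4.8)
The plan is to reduce the theorem, via Lemma~\ref{lem:a+b}, to the construction of one well-chosen coloured subgraph. Precisely, I would aim to produce a connected spanning subgraph $H\subseteq G$ and an edge-colouring of $H$ with a palette $P_1$ of $t:=\ceil{\sqrt{\Delta}}+1$ colours such that (a) $d^{\alpha}_H(v)\le d_G(v)/2$ for every vertex $v$ and colour $\alpha\in P_1$, and (b) the identity is the only automorphism of $H$ preserving this colouring. Given such an $H$, colour $G-H$ with an almost majority colouring on a \emph{disjoint} palette $P_2$ of four colours (Corollary~\ref{cor:almost}). By Lemma~\ref{lem:a+b} the union is a majority colouring of $G$; and since $P_1\cap P_2=\emptyset$, any automorphism of $G$ preserving the combined colouring maps $H$-edges to $H$-edges, hence restricts to a colouring-preserving automorphism of $H$, hence is trivial by~(b). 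This gives $M'_D(G)\le t+4=\ceil{\sqrt{\Delta(G)}}+5$. (The same bookkeeping also goes through with $t+1$ colours on $H$ and a weak majority $3$-colouring of $G-H$, in the spirit of Lemma~\ref{lem:asymmetric}, should that split turn out to be more convenient.)

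To build $H$, I would take a spanning tree $T$ of $G$, root it — at its centre, or at a vertex pinned down by a reserved colour pattern in its neighbourhood — and then colour $T$ (possibly after adding a few non-tree edges) while breaking symmetry outward from the root: having already forced every colouring-preserving automorphism to fix all vertices in $S_0(\text{root})\cup\dots\cup S_{r-1}(\text{root})$, I must prevent such an automorphism from permuting, for a fixed vertex $u$, its at most $\Delta-1$ children in $S_r(\text{root})$. Here is where $\sqrt{\Delta}$ enters, exactly as in Proposition~\ref{prop:K2n}: a palette of $t=\ceil{\sqrt{\Delta}}+1$ colours has $t^{2}>\Delta$ ordered pairs, so I can attach to each such child $w$ a distinct \emph{code} read off from two of its incident $H$-edges — typically the edge to $u$ together with one edge from $w$ to a child (or to a designated private neighbour) of $w$ — and distinct codes rule out any non-trivial permutation; induction then yields~(b). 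Constraint~(a) is maintained by spreading, at each vertex, the colours of its $H$-edges as evenly as the code requirements allow: at a vertex with large tree-degree this is affordable because it automatically has large $G$-degree, while at a vertex of $G$-degree $2$ or $3$ the at most three $H$-edges simply receive distinct colours; in every case at most three colours are \emph{forced} at a vertex, and $t\ge3$ because $\delta\ge2$ forces $\Delta\ge2$.

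The step I expect to be the crux is guaranteeing, for \emph{every} vertex that has a sibling, access to the second coordinate of its code from an $H$-edge coloured in the \emph{same} palette $P_1$: if the second colour were allowed to come from a separate palette the bound would deteriorate to roughly $2\sqrt{\Delta}$, so both coordinates must be $P_1$-colours. This is automatic at internal vertices of $T$ but fails at leaves of $T$ that have siblings and are forced to share their first coordinate with a sibling, since such a leaf carries only one $H$-edge; resolving it requires either choosing $T$ so that this situation does not occur, or enlarging $H$ by one carefully selected non-tree edge at each such leaf — and the latter costs the clean ``tree-centre'' justification for fixing the root, which must then be recovered by a separate argument (a reserved colour near the root, or a direct analysis of the code assignment). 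A further technical nuisance is that the codes must be genuinely isomorphism-invariant, not dependent on an auxiliary ordering of the children, which forces the encoding to be phrased through colour multisets or through quantities one level deeper in the tree. Finally, the graphs with very small $\Delta$ need no real separate treatment — for $\Delta\le4$ the bound reads $7$, which is delivered by Proposition~\ref{prop:deg2MD7} whenever a connected asymmetric spanning subgraph exists and by the construction above otherwise, and the pair-count $t^2>\Delta$ still holds there.
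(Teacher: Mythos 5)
Your reduction is sound as far as it goes: with disjoint palettes $P_1$ for $H$ and $P_2$ for $G-H$, a colour-preserving automorphism of $G$ maps $H$-edges to $H$-edges, so Lemma~\ref{lem:a+b} plus Corollary~\ref{cor:almost} correctly converts a connected spanning subgraph $H$ with a colour-asymmetric colouring satisfying $d^{\alpha}_H(v)\le d_G(v)/2$ into the claimed bound. The genuine gap is that the object this reduction demands is never actually constructed. Everything rests on producing the coloured $H$ with only $\ceil{\sqrt{\Delta}}+1$ colours, and your second and third paragraphs are a list of obstacles with tentative fixes rather than a construction. The obstacles you name are real and they interact badly: a leaf of $T$ with siblings needs a second code coordinate, so you add a non-tree edge; but that edge may go to a sibling (in which case two siblings can share both the first coordinate's colour class and the very edge carrying their second coordinate, leaving them interchangeable), or to an ancestor $z$ (in which case the free choice of second coordinates collides with the requirement $d^{\alpha}_H(z)\le d_G(z)/2$ at a vertex $z$ that may absorb many such back-edges). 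Worse, once non-tree edges are added, the $T$-levels are no longer the $H$-distance spheres, so the inductive claim ``every colouring-preserving automorphism of $H$ fixes $S_0\cup\dots\cup S_{r-1}$ pointwise'' loses the invariant ($H$-spheres about a fixed root) that makes it propagate; and the root itself is only fixed by a ``reserved colour pattern'' that you do not specify and that cannot work at a root of degree $2$ with all colours distinct. So conditions (a) and (b) are each plausible separately, but you have not shown they can be met simultaneously with $\ceil{\sqrt{\Delta}}+1$ colours, and that is the entire content of the theorem.

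For comparison, the paper does not route through a spanning subgraph at all. It first treats $2$-connected graphs: a longest cycle is coloured with a rigid pattern $C_0(\alpha,\beta)$ using two reserved colours $0,0'$ that never appear elsewhere, which fixes the cycle pointwise without any root-finding argument; the colouring is then grown through $2$-connected pieces via Lemma~\ref{lem:H}, which colours an \emph{isometric} subgraph between two already-fixed vertices $a,b$ sphere by sphere, using explicit ``forbidden colour'' bookkeeping to keep the majority condition while assigning distinct colour multisets to interchangeable vertices. Because every vertex of that subgraph lies on an $a$--$b$ geodesic, the sphere structure is automatically preserved by the relevant automorphisms --- exactly the invariant your tree-plus-back-edges construction loses. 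Graphs of connectivity $1$ are then handled by a separate block-decomposition argument (Lemmas~\ref{lem:2-connected}--\ref{lem:lem3.3}). If you want to salvage your approach, you would essentially have to reprove Lemma~\ref{lem:H} inside your subgraph $H$, at which point the spanning-subgraph reduction buys you nothing over the paper's direct colouring.
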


In order to prove main Theorem, at the beginning we will prove it for 2-connected graphs and then for the graphs of connectivity 1. We follow the idea of the proof presented in \cite{IKPW}. We use similar lemmas, stating them here for completeness. However, we omit the details of the proofs and only make clear the changes that were necessary for the resulting coloring to be a majority coloring.

In the proof of the main result, we construct a majority distinguishing coloring, using colors from the set $Z=\{0,0',1,\ldots,\ceil{\sqrt{\Delta}}+3\}$. As noted earlier, we may assume that $\Delta(G) \geq 4$. Observe that we always have at least seven colors at our disposal.

\begin{lemma}\label{lem:H}
Let $a, b$ be two vertices of a graph $H$ of maximum degree at most $\Delta$, such that 
$$\dist(a,v)+\dist(v,b)=\dist(a,b)$$
for every vertex $v \in V(H)$. Then $H$ admits an almost majority coloring with $\ceil{\sqrt{\Delta}}+3$ colors breaking every automorphism of $\Aut(H)_{a,b}$.
\end{lemma}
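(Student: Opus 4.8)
The plan is to build the coloring along the "layers" defined by distance from $a$. The hypothesis $\dist(a,v)+\dist(v,b)=\dist(a,b)$ for every $v$ means that $H$ decomposes into spheres $S_0(a),S_1(a),\dots,S_d(a)$ with $a\in S_0$, $b\in S_d$, and every edge of $H$ joins consecutive spheres $S_r(a)$ and $S_{r+1}(a)$ (an edge inside a sphere would create a vertex violating the additivity condition); in particular $H$ is bipartite and layered. Any automorphism $\varphi\in\Aut(H)_{a,b}$ fixes $a$ and $b$, hence preserves $\dist(\cdot,a)$, and therefore maps each sphere $S_r(a)$ to itself. So it suffices to choose, for each $r$, a coloring of the edges between $S_r(a)$ and $S_{r+1}(a)$ that (i) is an almost majority coloring and (ii) together with the information already fixed in spheres $S_0,\dots,S_r$, leaves no nontrivial permutation of $S_{r+1}(a)$ that respects adjacency and colors. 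We proceed by induction on $r$: assume the partial coloring fixes every vertex of $S_r(a)$; we extend it so that every vertex of $S_{r+1}(a)$ becomes fixed as well.

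For the extension step, group the vertices of $S_{r+1}(a)$ by their "down-neighborhood" in $S_r(a)$ (the set of their neighbors in the previous sphere, which is now a set of already-distinguished vertices). Vertices with different down-neighborhoods are automatically separated, so we only need to break symmetry within a class of vertices sharing the same down-neighborhood $N$. For such a class, the edges from a vertex $v$ of the class to $N$ carry a multiset of colors from $Z$; since we are allowed $\ceil{\sqrt\Delta}+3$ colors and $|N|\le\Delta$, and — crucially, following the counting in Proposition~\ref{prop:K2n} — the number of majority-admissible color patterns on $|N|\le\Delta$ edges grows roughly like $\binom{\ceil{\sqrt\Delta}+3}{\cdot}$ which exceeds $\Delta$, there are enough distinct admissible patterns to give the vertices of the class pairwise different "color signatures" while keeping, at each vertex, no color used on more than half its down-edges. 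The two distinguished colors $0$ and $0'$ in the palette $Z$ are there to handle parity and the endpoints: near $a$ and $b$ (where a vertex may have all its $H$-edges in a single sphere-transition and thus be a potential majority-violator) we reserve $0,0'$ so that a vertex of degree $\delta$ in $H$ still gets its edges split as evenly as the arithmetic allows. This is precisely the point where one must be careful, since $H$ need not have minimum degree $2$ — a vertex of degree $1$ in $H$ is permitted to receive any color by the definition of almost majority coloring, which is what lets the argument go through at "dead ends" of the layering.

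I would then assemble the per-sphere colorings into a global coloring of $H$ and verify the two required properties. For the majority (almost majority) condition: each vertex $v\in S_r(a)$ has its incident edges split between those going down to $S_{r-1}(a)$ and those going up to $S_{r+1}(a)$; the up-edges were colored to be balanced at $v$ by the construction at step $r$, the down-edges were colored at step $r-1$, and one shows that even in the worst case (all edges on one side, e.g.\ at $a$ or $b$) the reserved colors $0,0'$ keep every color class at most $\lceil d_H(v)/2\rceil$, hence at most $d_H(v)/2$ whenever $d_H(v)\ge 2$. For the distinguishing condition restricted to $\Aut(H)_{a,b}$: such a $\varphi$ fixes each sphere setwise, fixes $S_0=\{a\}$ pointwise trivially, and by the inductive design of the coloring, if it fixes $S_r(a)$ pointwise then it must fix $S_{r+1}(a)$ pointwise (different down-neighborhoods or different color signatures forbid any nontrivial image); hence $\varphi$ is the identity. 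The main obstacle I anticipate is the quantitative counting in the extension step — making sure that $\ceil{\sqrt\Delta}+3$ colors really do yield more than $\Delta$ pairwise-distinguishable admissible patterns on each class \emph{simultaneously} with keeping the majority condition at the vertices of $S_r(a)$ on the up-side (a balancing constraint on the \emph{columns}, not just the \emph{rows}, exactly as in the last paragraph of the proof of Proposition~\ref{prop:K2n}); this is the delicate bookkeeping that the square-root in the bound is designed to absorb, and it is the step I would expect the authors to handle by an explicit assignment rather than a crude count.
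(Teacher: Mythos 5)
There is a genuine gap at the heart of your argument: the inductive invariant ``after coloring the edges between $S_r(a)$ and $S_{r+1}(a)$, every vertex of $S_{r+1}(a)$ is fixed'' is not achievable with $\ceil{\sqrt{\Delta}}+3$ colors. Consider a vertex $u \in S_r(a)$ whose up-neighborhood consists of $\Delta-1$ vertices of $S_{r+1}(a)$, each having $u$ as its \emph{only} neighbor in $S_r(a)$. These vertices form one of your classes with common down-neighborhood $N=\{u\}$; each has exactly one down-edge, so its ``color signature'' is a single color, and there are only $\ceil{\sqrt{\Delta}}+3$ possible signatures for $\Delta-1$ vertices. By pigeonhole, some $\Omega(\sqrt{\Delta})$ of them receive identical signatures and remain interchangeable, no matter how you color. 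This is precisely why the paper's proof does \emph{not} try to fix each sphere in the forward pass: it only maintains the weaker invariant that every permutable subset of $S_{r+1}(a)$ has size at most $\ceil{\sqrt{\Delta}}$, and it obtains pointwise fixing by propagating \emph{backwards} from $b$, which is fixed because we only quantify over $\Aut(H)_{a,b}$. Note that your symmetry-breaking argument never actually uses the fixedness of $b$ --- a warning sign, since it would ``prove'' the stronger claim about $\Aut(H)_a$. The small bound on permutable sets is exactly what makes the paper's last step work: $b$ has at most $\Delta$ incident edges, each permutable class of $S_{\dist(a,b)-1}(a)$ has at most $\ceil{\sqrt{\Delta}}$ members, so the edges from each such class into $b$ can be given pairwise distinct colors.

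Two further problems. First, your claim that the hypothesis forces every edge to join consecutive spheres (hence that $H$ is bipartite and layered) is false: take $V(H)=\{a,u,v,b\}$ with all edges present except $ab$; then $\dist(a,x)+\dist(x,b)=\dist(a,b)=2$ for every vertex $x$, yet $uv$ lies inside $S_1(a)$. The paper colors intra-sphere edges in a separate final step; this is easy but must be done and accounted for in the majority condition. Second, you yourself flag that the column-balancing count (the majority condition at the vertices of $S_r(a)$ on their up-edges) is left open; the paper resolves it with explicit ``forbidden color'' bookkeeping at both endpoints of each edge. These two issues are repairable, but the first one above is structural and requires reorganizing the proof around the backward propagation of fixedness from $b$.
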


\begin{proof}
For $r<\dist(a,b)$, every vertex $v \in S_r(a)$ has at least one and at most $\Delta-1$ neighbors in $S_{r+1}(a)$. For $r \geq 1$, denote $H_r=H[S_0(a)\cup\ldots \cup S_r(a)]$. We recursively color the edges between $S_r(a)$ and $S_{r+1}(a)$ with $\ceil{\sqrt{\Delta}}+3$ colors such that for each $r$ the following four conditions are satisfied:
\begin{itemize}
\item the coloring of $H_r$ is an almost majority coloring;
\item the coloring of $H_{r+1}$ is an weak majority coloring;
\item if $S_{r+1}(a)$ is fixed pointwise by every automorphism $\varphi \in \Aut(H)_{a,b}$ preserving the coloring of $H_{r+1}$, then $S_r(a)$ is also fixed pointwise by $\varphi$;
\item if $A \subseteq S_{r+1}(a)$ is a set of vertices such that there exists a cyclic permutation of $A$ that can be extended to an automorphism $\varphi \in \Aut(H)_{a,b}$ preserving the coloring of $H_{r+1}$, then $|A|\leq \ceil{\sqrt{\Delta}}$.
\end{itemize}

The first step of our coloring is identical to that presented in \cite{IKPW}. 
Namely, we partition the set of edges incident with $a$ into at most $\ceil{\sqrt{\Delta}}$ subsets of cardinality at most $\ceil{\sqrt{\Delta}}$, with an additional requirement that no subset has cardinality greater than half of $d(a)$. 
We color edges in each subset with the same color. 
If $d(a)=1$, then we color the edge incident with $a$ any color. 

Suppose that we have already defined an edge coloring $f$ of $H_r$ satisfying the above four conditions for $r-1$ instead of $r$. 
Let $A=\{v_1, \ldots, v_p\}$, with $p\geq2$, be a set of vertices of $S_r(a)$ such that there exists a cyclic permutation of $A$ that can be extended to an automorphism $\varphi \in \Aut(H)_{a,b}$ preserving the coloring $f$. 
By assumption, $1 \leq |A| \leq \ceil{\sqrt{\Delta}}$. 
Each vertex $v_i$ of $A$ has the same number $k\leq \Delta-1$ of incident edges joining it to $S_{r+1}(a)$.

Consider the vertex $v_i \in A$ for $i \in \{1, \ldots, p\}$. Let $N_{r+1}(v_i):=N(v_i) \cap S_{r+1}(a)$ and $N_{r-1}(v_i):=N(v_i) \cap S_{r-1}(a)$. We say that the color is \textit{forbidden} in vertex $v \in S_{r}(a)$ if it occurs $\ceil{\frac{d}{2}}$ times, the color is \textit{forbidden} in vertex $v \in S_{r+1}(a)$ if it occurs more than $\frac{d}{2}$ times, where $d$ is a number of already colored edges incident with $v$. 
Except for the case $k=1$ we have $\ceil{\sqrt{\Delta}}+1$ available colors and we ensure that for each vertex $v_i \in A$ we can assign distinct multiset of colors. 

If $k=1$, then for $v_i$ there are at most two forbidden colors from $N_{r-1}(v_i)$. Additionally we forbid one color which occurs the most often as a forbidden color among the vertices from $N_{r+1}(v_i)$. Therefore we have at least $\ceil{\sqrt{\Delta}}$ colors and in $A$ there are also at most $\ceil{\sqrt{\Delta}}$ vertices.
For every $v_i \in A$ we assign a distinct color. Notice that this coloring is almost majority coloring.

If $2 \leq k \leq \ceil{\sqrt{\Delta}}$, we forbid one from the forbidden colors from $N_{r-1}(v_i)$ and one color which occurs the most often as a forbidden color amount the vertices from $N_{r+1}(v_i)$. Therefore we have at least $\ceil{\sqrt{\Delta}}+1$ colors at out disposal. For every $v_i \in A$ we assign distinct set of $k$ colors. 

If $k \geq \ceil{\sqrt{\Delta}}+1$, we do not use one from the forbidden colors from $N_{r-1}(v_i)$ and one color which occurs the most often as a forbidden color amount the vertices from $N_{r+1}(v_i)$. Therefore we have at least $\ceil{\sqrt{\Delta}}+1$ colors. When $k \neq 4$, for every $v_i \in A$ we assign distinct color, which will occur only once. The remaining $k-1$ edges we color majority using each of the remaining colors at least twice.
For $k=4$, for every $v_i \in A$ we assign distinct pair of colors, which we use twice. 

Now, using the multisets of colors assigned to $v_i$, we color its incident edges. For any edge incident with $v_i$ we can assign a color which is not forbidden in any of its ends, because any color appears at most $\floor{\frac{k}{2}}$ times in multiset, it is not forbidden in at least $\ceil{\frac{k}{2}}$ in $N_{r+1}(v_i)$ and any vertex from $N_{r+1}(v_i)$ has at most one forbidden color.

Thanks to forbidden colors and the fact that in multisets assigned to each $v_i$ one color appears at most half times, this coloring is an almost majority coloring.

Furthermore, this way we produce at most $\ceil{\sqrt{\Delta}}$ vertices in $S_{r+1}(a)$ that can be interchanged by any automorphism preserving the coloring of $H_{r+1}$ because at most $\ceil{\sqrt{\Delta}}$ edges joining any vertex of $A$ to $S_{r+1}(a)$ have the same color.

The second last sphere $S_r(a)$, i.e. for $r=\dist(a,b)-1$, has at most $\Delta$ vertices and, due to our construction, any of its subsets $A$ that can be permuted, has at most $\ceil{\sqrt{\Delta}}$ vertices. If $d(b)=1$, then we color the edge incident with $b$ with any color. We color the edges between $b$ and the vertices of $A$, such that $|A| \geq 2$, with distinct colors, without the forbidden colors for $v_i$. Next we color the possibly remaining edges between $b$ and the vertices of $A$, such that $|A|=1$, with any colors, keeping the majority coloring in $b$. The unique vertex $b$ of the last sphere is fixed by assumption, hence all spheres are fixed pointwise with respect to any automorphism $\varphi \in \Aut(H)_{a,b}$ preserving the coloring $f$ of $H$. 

Finally, we color edges $xy$ within each sphere with an arbitrary color which is not forbidden neither in $x$ nor in $y$. It is possible, since we have at least 5 colors.
\end{proof}

Given a cycle $C$ of a 2-connected graph $G$ and two distinct colors $\alpha, \beta \in Z \setminus \{0, 0'\}$ by $C_0(\alpha,\beta)$ we denote this cycle colored such that three consecutive edges are colored with $\alpha, 0, \beta$ in that order, and all other edges of the cycle are colored with alternating $0'$ and $0$.

\begin{theorem}\label{thm:2-connected}
    If $G$ is a 2-connected, finite graph with maximum degree $\Delta$, then 
    $$M'_D(G) \leq \ceil{\sqrt{\Delta}}+5.$$
\end{theorem}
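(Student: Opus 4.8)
The plan is to use the classical ear decomposition of a 2-connected graph together with Lemma~\ref{lem:H}, mirroring the strategy of~\cite{IKPW} but tracking the majority condition throughout. First I would fix a starting cycle $C$ and a vertex, and realize $G$ as $C=G_0 \subseteq G_1 \subseteq \cdots \subseteq G_t = G$, where each $G_{i+1}$ is obtained from $G_i$ by attaching an ear $P_i$ (an internally disjoint path, possibly a single edge, with both endpoints in $G_i$). The idea is to color the initial cycle using the special pattern $C_0(\alpha,\beta)$ introduced just before the theorem: the alternating $0'/0$ coloring makes $C$ almost majority at every internal vertex, while the three consecutive edges colored $\alpha, 0, \beta$ pin down an orientation/basepoint of $C$ so that $\Aut$ restricted to $C$ is already trivial. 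This uses only colors $0, 0', \alpha, \beta$, i.e. four of the $\ceil{\sqrt{\Delta}}+5$ available, leaving $\ceil{\sqrt{\Delta}}+3$ "fresh" colors $1, \ldots, \ceil{\sqrt{\Delta}}+3$ for the ears.

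Next, for each ear $P_i$ with endpoints $a, b$ already in $G_i$, I would apply Lemma~\ref{lem:H} to the path $P_i$ (viewed as the graph $H$ with the two distinguished vertices $a,b$): since $P_i$ is a path, the hypothesis $\dist(a,v)+\dist(v,b)=\dist(a,b)$ holds trivially for every internal vertex, so Lemma~\ref{lem:H} gives an almost majority coloring of the internal edges of $P_i$ with $\ceil{\sqrt{\Delta}}+3$ colors that breaks every automorphism in $\Aut(P_i)_{a,b}$. The delicate point is bookkeeping at the attachment vertices $a$ and $b$: these already carry two colored edges from $G_i$ (or more, from earlier ears), and we must ensure that adding the two new ear-edges at $a$ and at $b$ keeps the coloring (almost) majority there. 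I would handle this exactly as in the $k=1$ analysis inside the proof of Lemma~\ref{lem:H}: at $a$ we forbid the at most two colors already dominant at $a$ and one extra "bad" color coming from the other end, and since we have $\ceil{\sqrt{\Delta}}+3 \geq 7$ colors, a legal choice always remains; symmetrically at $b$. One must also give $a$ and $b$ distinct color data on the two ear-edges (and distinct from the $\alpha,0,\beta$ of the base cycle if the ear attaches there) so that no automorphism can swap the two ends of an ear or map one ear onto another.

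The distinguishing argument then runs by induction on the ear decomposition: assuming $\Aut(G_i)$ fixes $V(G_i)$ pointwise under the constructed coloring, any automorphism $\varphi$ of $G_{i+1}$ preserving the coloring must map the ear $P_i$ to another ear with the same endpoints and the same endpoint-color-data; the way we chose colors forces $\varphi$ to fix $a$ and $b$ and hence, by the last condition of Lemma~\ref{lem:H}, to fix $P_i$ pointwise, so $\varphi$ fixes $V(G_{i+1})$ pointwise. Accumulating over all ears, $\Aut(G)$ under the final coloring is trivial, and since every vertex saw its majority condition respected at every stage, the coloring is a majority coloring; thus $M'_D(G) \leq \ceil{\sqrt{\Delta}}+5$. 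The main obstacle I anticipate is the local majority bookkeeping at attachment vertices that receive many ears: a vertex $v$ of degree $d(v)$ close to $\Delta$ may be an endpoint of up to $\Theta(\Delta)$ ears, and we must guarantee that, across all these ear-edges plus its base-cycle edges, no single color ever exceeds $d(v)/2$ — this requires distributing ear-edge colors at $v$ in a balanced way (roughly $\sqrt{\Delta}$ colors each used $\sqrt{\Delta}$ times, again in the spirit of the first step of Lemma~\ref{lem:H}), rather than the naive per-ear choice, and reconciling "balanced at $v$" with "distinct data at the two ends of each ear" is where the real care is needed.
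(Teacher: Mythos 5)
There is a genuine gap, and it sits exactly where you yourself locate ``the real care'': the case of many ears sharing an attachment vertex. Your plan applies Lemma~\ref{lem:H} to each ear $P_i$ \emph{separately, as a path}. But for a single path the group $\Aut(P_i)_{a,b}$ is trivial, so the lemma does no distinguishing work in your usage; the only symmetries that actually threaten the construction are automorphisms mapping one ear onto another (or reversing an ear), and those are not broken by coloring each ear in isolation. When a vertex $a$ of degree close to $\Delta$ is the endpoint of $\Theta(\Delta)$ internally disjoint paths of the same length to the same vertex $b$ (think of $K_{2,\Delta}$ plus a long cycle through $a$ and $b$), ``distinct color data on the two ear-edges'' is impossible with only $\ceil{\sqrt{\Delta}}+3$ colors at $a$; you must distinguish ears by the \emph{pair} (or longer sequence) of colors along them, while simultaneously keeping every color below $d(a)/2$ at $a$ and below $d(b)/2$ at $b$, and below half-degree at each internal vertex. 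That simultaneous list-assignment is precisely the content of Lemma~\ref{lem:H}, but only when $H$ is taken to be the whole geodesic subgraph between $a$ and $b$ (all vertices with $\dist(a,v)+\dist(v,b)=\dist(a,b)$), so that all parallel branches are colored together, sphere by sphere, with the invariant that interchangeable sets in each sphere have size at most $\ceil{\sqrt{\Delta}}$ and can then be separated by distinct multisets drawn from $\ceil{\sqrt{\Delta}}+1$ colors. This is what the paper does: it follows the recursion of Theorem~2.2 of~\cite{KP}, repeatedly taking a shortest path leaving the already-colored $2$-connected subgraph $F$ and building from it such a graph $H$, then invoking Lemma~\ref{lem:H} on $H$ rather than on individual paths. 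Your proposal names the obstacle but defers its resolution, so as written the argument is incomplete.

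Two smaller omissions: the paper must (and does) treat separately the $2$-connected graphs whose longest cycle has length at most four, namely $\Delta\le 3$, $K_{2,\Delta}$ and $K_{2,\Delta}+e$, where the cycle-plus-recursion scheme degenerates; these are handled via $\chi'_D$ and Proposition~\ref{prop:K2n}. Also, after all vertices have an incident colored edge there may remain uncolored edges inside spheres or between already-fixed vertices, and one needs the final observation that at most two colors are forbidden at each endpoint so a legal color always exists; your ear decomposition covers all edges by construction, so this particular step you do not need, but the parallel-ear issue above remains the essential missing piece.
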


\begin{proof}
If in the graph $G$ the length of any cycle is at most four, then $\Delta(G)\leq3$ or $G=K_{2,\Delta}$ or $G=K_{2,\Delta}+e$, where $e$ is an edge between two vertices of maximum degree $\Delta$. 
Notice, that for $\Delta \leq 3$ by~\cite{KP} it holds that $M'_D(G)=\chi'_D(G) \leq \Delta+2 \leq \ceil{\sqrt{\Delta}}+5$.
From Proposition~\ref{prop:K2n}  we have that $M'_D(K_{2,n}) \leq \min \{k : k(k-1)-1 \geq \Delta \}<\ceil{\sqrt{\Delta}}+5$.
Hence, we will prove the Theorem for $\Delta \geq 4$.

We proceed similarly to the proof of Theorem~2.2 in~\cite{KP}. We take the longest cycle $C$ in $G$ and color it as $C_0(\alpha,\beta)$ for some distinct $\alpha, \beta \in Z \setminus \{0, 0'\}$. Moreover we color all chords of $C$ with some distinct $\gamma, \delta \in Z \setminus \{0, 0', \alpha, \beta\}$ in such a way that this coloring of cycle and chords is majority. We will never use the colors $0, 0'$ outside $C$. Thus each vertex of $C$ is fixed by every automorphism of $G$ preserving the coloring, because the endpoints of the edge with color $\beta$ are fixed.  

Then, we proceed identical like in the proof of Theorem~2.2 in~\cite{KP}. We color the remaining edges of $G$ recursively. Assuming we have already colored the edges of a 2-connected subgraph $F$ of $G$ and that there exists a vertex incident only with edges not yet colored, we take the shortest path with endpoints in $F$ and at least one vertex outside of $F$. We use this path to build a graph $H$, which we color from Lemma~\ref{lem:H} using at most $\ceil{\sqrt{\Delta}}+3$ colors. If $d_H(a)=1$ (or $d_H(b)=1$), we color the edge incident with $a$ (or $b$) with a color that was assigned to fewer than half of the colored edges incident with $a$ (or $b$). We apply this procedure until all vertices of $G$ have at least one incident edge colored. If there are uncolored edges, we color them in order to obtain a majority coloring. For each edge at most two colors are forbidden at each vertex. Since we have at least five colors (except for $0$ and $0'$), then we can always choose a color for an edge.
\end{proof}

Actually, we have proved the following result which we use for the graphs of connectivity one. 

\begin{corollary}\label{cor:2-connected}
    Let $G$ be a 2-connected graph with maximum degree $\Delta$ and let $C$ be the longest cycle or any cycle of length at least five in $G$. Then for every two distinct colors $\alpha, \beta \in Z \setminus \{0, 0'\}$, any coloring $C_0(\alpha, \beta)$ of its edges can be extended to a distinguishing coloring of $G$ with colors from the set $Z$ such that all edges colored with $0$ belong to the cycle $C$.
\end{corollary}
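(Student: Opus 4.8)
The plan is to recognize that Corollary~\ref{cor:2-connected} records exactly what the proof of Theorem~\ref{thm:2-connected} constructs, once one keeps track of which edges receive which colours; so the proof will consist of revisiting that construction and verifying three strengthenings over the bare statement of Theorem~\ref{thm:2-connected}: that the seed colours $\alpha,\beta$ may be \emph{any} two distinct elements of $Z\setminus\{0,0'\}$; that the seed cycle $C$ need only be the longest cycle \emph{or} an arbitrary cycle of length at least five; and that in the resulting colouring the colour $0$ occurs only on edges of $C$.

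Two of these are immediate from inspection. In the proof of Theorem~\ref{thm:2-connected} one first colours $C$ as $C_0(\alpha,\beta)$ and declares that the colours $0$ and $0'$ are never used outside $C$; hence every edge coloured $0$ lies on $C$, which is the third point. For the first point, the argument never refers to the actual values of $\alpha$ and $\beta$: all it uses is that they are distinct, different from $0$ and $0'$, and arranged on $C$ in the $C_0(\alpha,\beta)$ pattern. In particular, the step showing that every vertex of $C$ is fixed by every colour-preserving automorphism relies only on the colour pattern around $C$ together with the scarcity of the colours $0,0'$ (so that the $\beta$-edge of $C$ is recognizable and its endpoints, and then inductively all of $C$, are fixed), and this runs verbatim for any admissible pair $\alpha,\beta$ and for a cycle of any length at least three.

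For the second point I would examine where the hypothesis ``$C$ is the longest cycle'' is actually used in the proof of Theorem~\ref{thm:2-connected}. It appears in exactly two roles. First, it lets one isolate the graphs all of whose cycles have length at most four, namely $\Delta\le 3$ together with $K_{2,\Delta}$ and $K_{2,\Delta}+e$, which are treated separately via \cite{KP} and Proposition~\ref{prop:K2n}; but a $2$-connected graph that \emph{has} a cycle of length at least five is none of these, so for the corollary this role is vacuous. Second, $C$ serves as the seed of the recursion in which the remaining edges are coloured by repeatedly applying Lemma~\ref{lem:H} to auxiliary graphs built from shortest paths joining the already-coloured $2$-connected part to the rest of $G$; here the geodesic hypothesis of Lemma~\ref{lem:H} is secured by the choice of a \emph{shortest} connecting path, and the inductive claim that every already-coloured vertex is fixed propagates exactly as in \cite{IKPW,KP}, neither of which uses maximality of $C$. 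Thus the entire construction of Theorem~\ref{thm:2-connected} runs unchanged when $C$ is replaced by any cycle of length at least five coloured as $C_0(\alpha,\beta)$, and the colouring it outputs is a majority distinguishing colouring, in particular a distinguishing one, with all $0$-coloured edges on $C$, which is precisely the corollary.

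The part needing the most care, and the main obstacle, is making sure that the ``$C$ is fixed pointwise'' argument is genuinely independent of the length of $C$ and does not secretly use that no $\alpha$- or $\beta$-coloured edge lies outside $C$: one has to check that recognizability of the special $0$-edge of $C$ (the one flanked by the $\alpha$- and the $\beta$-edge) uses only the colours occurring on $C$ itself together with the absence of $0'$ off $C$, so that cycles of length $5,6,7,\dots$ all behave identically. Once this is verified, the remainder of the proof is a line-for-line rerun of the proof of Theorem~\ref{thm:2-connected}.
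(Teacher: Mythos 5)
Your proposal is correct and takes essentially the same route as the paper, whose entire proof of this corollary is the single sentence that the claim follows from the proof of Theorem~\ref{thm:2-connected}. Your careful accounting of where the maximality of $C$ is actually used (only to dispose of the short-cycle cases, which are vacuous here) and why the fixing of $C$ depends only on the $C_0(\alpha,\beta)$ pattern and the scarcity of $0,0'$ off $C$ is exactly the verification the paper leaves implicit.
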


\begin{proof}
    The claim follows from the proof of Theorem~\ref{thm:2-connected}.
\end{proof}

Observe, that for $\Delta \leq 3$ by~\cite{KP} it holds $M'_D(G)=\chi'_D(G) \leq \Delta+2 \leq \ceil{\sqrt{\Delta}}+5$. Hence, from now on, we assume that $G$ is a graph of maximum degree $\Delta \geq 4$. The proofs of following lemmas follow straightforwardly from proofs of Lemma~3.1, Lemma~3.2 and Lemma~3.3 from~\cite{IKPW}. Since in general we have more colors, the calculations differ, hence we can lower necessary condition for $\Delta \geq 4$.

\begin{lemma}\label{lem:2-connected}
    Let $u_0$ be the vertex of a 2-connected block $H_0$ of maximum degree at most $\Delta$, where $\Delta \geq 4$. Then $H_0$ admits at least $\ceil{\sqrt{\Delta}}$ edge colorings with colors from the set $Z$, breaking all non-trivial automorphisms of $\Aut(H_0)_{u_0}$, which are pairwise non-isomorphic with respect to $\Aut(H_0)_{u_0}$, and do not contain $C_0(1,2)$.
\end{lemma}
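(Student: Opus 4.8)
The goal is to produce many pairwise non-isomorphic colorings of a rooted $2$-connected block $H_0$, all of which kill the vertex-stabilizer $\Aut(H_0)_{u_0}$, while avoiding the forbidden pattern $C_0(1,2)$. The natural strategy, following Lemma~3.1 of~\cite{IKPW}, is to work with a fixed long cycle through $u_0$ and vary only the \emph{label} on that cycle. Concretely, I would first pick a cycle $C$ in $H_0$ of length at least five that passes through $u_0$; since $H_0$ is $2$-connected with $\Delta\ge 4$ such a cycle exists (the only obstruction to having a cycle of length $\ge 5$ is the small cases $\Delta\le 3$, $K_{2,\Delta}$, $K_{2,\Delta}+e$, which are handled separately exactly as in the proof of Theorem~\ref{thm:2-connected}). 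Then, for each choice of an ordered pair of distinct colors $(\alpha,\beta)\in Z\setminus\{0,0'\}$ with $(\alpha,\beta)\ne(1,2)$, I would color $C$ as $C_0(\alpha,\beta)$ and invoke Corollary~\ref{cor:2-connected} to extend this to a distinguishing coloring of all of $H_0$ in which the color $0$ appears only on edges of $C$.

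The second step is to count how many genuinely different colorings this yields, up to $\Aut(H_0)_{u_0}$. The key point is that in a coloring built from $C_0(\alpha,\beta)$, the unique edge colored $\beta$ that is flanked on one side by a $0$-edge and on the other by the "$\alpha,0$" triple pins down an oriented distinguished arc of the cycle; consequently any automorphism in $\Aut(H_0)_{u_0}$ preserving the coloring must fix $C$ pointwise and preserve the orientation in which we read $(\alpha,\beta)$ off that arc. Hence two such colorings $C_0(\alpha,\beta)$ and $C_0(\alpha',\beta')$ can be isomorphic with respect to $\Aut(H_0)_{u_0}$ only if $(\alpha,\beta)=(\alpha',\beta')$ as \emph{ordered} pairs (the orientation of the cycle is not free because $u_0$ — or the distinguished triple — breaks the dihedral symmetry down to rotations, and the labels then break the rotations). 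To get the stated count of $\ceil{\sqrt\Delta}$ colorings it suffices to exhibit that many distinct admissible ordered pairs: fixing $\alpha=1$ and letting $\beta$ range over $Z\setminus\{0,0',1\}$ with $\beta\ne 2$ gives $|Z|-4=\ceil{\sqrt\Delta}$ choices, which is exactly what we need, and none of them equals $(1,2)$ so the pattern $C_0(1,2)$ never occurs.

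The main obstacle, and the place where I expect to spend real effort, is verifying that the resulting colorings are simultaneously \emph{majority} colorings and that the count survives the quotient by $\Aut(H_0)_{u_0}$. For the majority property one has to check it at the vertices of $C$ — here the $0/0'$ alternation together with the single $\alpha,0,\beta$ triple keeps every color at multiplicity at most $\lceil d/2\rceil$ at each cycle vertex, provided the chords of $C$ and the extension from Corollary~\ref{cor:2-connected} are colored from $Z\setminus\{0,0'\}$, which is guaranteed; the delicate point is the vertex $u_0$ itself, whose incident edges include cycle edges and possibly many chords, so I would argue as in Lemma~\ref{lem:H} that the available colors (at least seven, hence at least five outside $\{0,0'\}$) always allow a majority-compatible choice. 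For the non-isomorphism count, the subtlety is that an automorphism need not fix $C$ setwise a priori; one must use that $0$ appears only on $C$ to conclude that a color-preserving automorphism must map $C$ to a cycle using color $0$, which forces it to $C$ — this is precisely the argument structure of Lemma~3.1 in~\cite{IKPW}, and the only modification is bookkeeping the extra colors $0,0'$ and the reserved pair $(1,2)$.

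\begin{proof}
Since $\Delta \ge 4$ and $H_0$ is $2$-connected, either $H_0$ contains a cycle of length at least five, or $H_0 \in \{K_{2,\Delta}, K_{2,\Delta}+e\}$; in the latter cases the claim follows by direct inspection using the colorings from Proposition~\ref{prop:K2n} suitably rooted at $u_0$, since then $\ceil{\sqrt\Delta}+5 > \min\{k : k(k-1)-1\ge\Delta\}$ gives enough slack. So assume $H_0$ has a cycle $C$ of length at least five; by $2$-connectivity we may take $C$ to pass through $u_0$. Fix $\alpha = 1$, and for each $\beta \in Z \setminus \{0,0',1,2\}$ consider the coloring $C_0(1,\beta)$ of $C$. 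By Corollary~\ref{cor:2-connected}, each such coloring extends to a distinguishing coloring $c_\beta$ of $H_0$ using colors from $Z$, with all $0$-colored edges lying on $C$; moreover the construction in the proof of Theorem~\ref{thm:2-connected} keeps the coloring majority, in particular at $u_0$, since outside $C$ at least five colors (those in $Z \setminus \{0,0'\}$) are always available and at most two are forbidden at either endpoint of any edge. None of the $c_\beta$ contains the pattern $C_0(1,2)$, as the distinguished triple on $C$ reads $1,0,\beta$ with $\beta \ne 2$, and $0,0'$ are used only on $C$.

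It remains to check that the $c_\beta$ are pairwise non-isomorphic with respect to $\Aut(H_0)_{u_0}$. Suppose $\varphi \in \Aut(H_0)_{u_0}$ satisfies $c_\beta(uv) = c_{\beta'}(\varphi(u)\varphi(v))$ for all edges $uv$. Since the edges colored $0$ in $c_\beta$ are exactly the alternating edges of $C$ (together with the middle edge of the distinguished triple) and likewise for $c_{\beta'}$, the automorphism $\varphi$ maps $C$ onto a cycle all of whose $0$-labelled structure matches that of $C$ in $c_{\beta'}$; as $C$ is the unique cycle carrying color $0$, we get $\varphi(C) = C$. The distinguished triple, being the only place on $C$ where two non-$0$ colors appear, is mapped to the distinguished triple, and the unique edge with a color $\notin \{0,0'\}$ adjacent to a $0$-edge on a prescribed side fixes both the location and the orientation in which the pair is read; hence $\varphi$ restricted to $C$ is the identity and $(1,\beta) = (1,\beta')$ as ordered pairs, i.e. $\beta = \beta'$. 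Therefore the colorings $\{c_\beta : \beta \in Z \setminus \{0,0',1,2\}\}$ are pairwise non-isomorphic with respect to $\Aut(H_0)_{u_0}$, and there are $|Z| - 4 = \ceil{\sqrt\Delta}$ of them, as required.
\end{proof}
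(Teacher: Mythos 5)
Your proposal takes essentially the same route as the paper, which for this lemma simply defers to Lemma~3.1 of~\cite{IKPW}: fix a cycle of length at least five, color it $C_0(\alpha,\beta)$, extend by Corollary~\ref{cor:2-connected} keeping $0,0'$ on the cycle only, and obtain the required number of pairwise non-isomorphic colorings by varying the ordered pair $(\alpha,\beta)$ among the $\ceil{\sqrt{\Delta}}+3$ colors of $Z\setminus\{0,0'\}$, with the orientation pinned down by the $\alpha,0,\beta$ triple. Two small caveats: $|Z|-4$ equals $\ceil{\sqrt{\Delta}}+1$ rather than $\ceil{\sqrt{\Delta}}$ (still sufficient, so harmless), and the exceptional blocks $K_{2,\Delta}$ and $K_{2,\Delta}+e$ need more than ``direct inspection,'' since Proposition~\ref{prop:K2n} produces a single distinguishing coloring rather than $\ceil{\sqrt{\Delta}}$ pairwise non-isomorphic ones (though the surplus of colors makes this easy to repair).
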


\begin{lemma}\label{lem:1-connected}
    Let $H_0$ be a graph of maximal degree at most $\Delta$, with $\Delta \geq 4$, consisting of $s \geq 2$ copies of a 2-connected block $B$ sharing a common cut vertex $u_0$. Then $H_0$ admits at least $\ceil{\sqrt{\Delta}}$ pairwise non-isomorphic distinguishing edge colorings with colors from the set $Z$, such that $C_0(1,2)$ does not appear in any of these colorings.
\end{lemma}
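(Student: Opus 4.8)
The plan is to mimic the structure of Lemma~3.3 from~\cite{IKPW}, adapting the counting to the larger palette available here. Let $B_1,\dots,B_s$ denote the $s\geq 2$ copies of the 2-connected block $B$, all sharing the cut vertex $u_0$. First I would fix, in each copy $B_j$, a cycle $C^{(j)}$ through $u_0$ of length at least five (such a cycle exists in a 2-connected block of order large enough; the small cases where no long cycle exists are $K_{2,\Delta}$-type blocks and are handled separately, exactly as in the proof of Theorem~\ref{thm:2-connected}). By Corollary~\ref{cor:2-connected}, any coloring $C^{(j)}_0(\alpha,\beta)$ with distinct $\alpha,\beta\in Z\setminus\{0,0'\}$ extends to a distinguishing coloring of $B_j$ using colors from $Z$ in which the color $0$ appears only on $C^{(j)}$. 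So within a single block we get a family of colorings indexed by the ordered pair $(\alpha,\beta)$, and crucially the positions of the $0$-edges pin down the cycle and hence fix every vertex of $B_j$ under any color-preserving automorphism.

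Next I would assemble the colorings of the individual blocks into colorings of $H_0$ and count how many of these are pairwise non-isomorphic. The key observation is that an automorphism of $H_0$ fixing $u_0$ can only permute the blocks $B_j$ among themselves (since $u_0$ is the unique cut vertex and each $B_j$ is a block), and, given the rigidity inside each block, a color-preserving automorphism is determined by which block is sent to which. Thus two assembled colorings are isomorphic with respect to $\Aut(H_0)_{u_0}$ essentially iff the multiset of per-block ``types'' (the data $(\alpha,\beta)$ together with the internal coloring) agrees after a permutation. To produce $\ceil{\sqrt{\Delta}}$ pairwise non-isomorphic colorings it then suffices to vary the type of just one distinguished block, say $B_1$, through $\ceil{\sqrt{\Delta}}$ distinct values while keeping the other blocks' colorings fixed; since $Z$ has $\ceil{\sqrt{\Delta}}+4$ colors and at least $\ceil{\sqrt{\Delta}}+2$ of them are available for the pair $(\alpha,\beta)$ (excluding $0,0'$), there are well more than $\ceil{\sqrt{\Delta}}$ admissible ordered pairs, and choosing pairs that differ, e.g., only in $\alpha$ guarantees the resulting global colorings are non-isomorphic. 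Throughout, I would only ever use $0,0'$ on the cycles $C^{(j)}$, and I would avoid the pair $(\alpha,\beta)=(1,2)$ so that $C_0(1,2)$ never appears.

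Two points need care. First, the majority (equivalently, almost majority at $u_0$) condition: the edges incident with $u_0$ come from $s$ different blocks, and in each block $u_0$ lies on its cycle $C^{(j)}$, contributing two edges there plus possibly more from chords or from the recursive Lemma~\ref{lem:H} construction; one must check that with the per-block colorings chosen, no color occurs on more than $\tfrac12 d_{H_0}(u_0)$ of the edges at $u_0$. This is arranged exactly as in Lemma~\ref{lem:H} and Theorem~\ref{thm:2-connected}: the first step partitions the edges at $u_0$ into colour classes each of size at most half the degree, and since we are free to recolor the few edges at $u_0$ that are not forced, we can always rebalance. Second, one must confirm that the colorings are genuinely distinguishing on all of $H_0$, not merely breaking $\Aut(H_0)_{u_0}$: since every vertex of each block $B_j$ is fixed by Corollary~\ref{cor:2-connected} once the cycle is fixed, and any automorphism of $H_0$ must fix $u_0$ (it is the unique cut vertex, characterised by lying in every block), every color-preserving automorphism of $H_0$ is the identity. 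I expect the main obstacle to be the bookkeeping of the non-isomorphism count — verifying that varying one block's type through $\ceil{\sqrt{\Delta}}$ values really yields pairwise non-isomorphic global colorings, which requires arguing that no permutation of the other (identically colored) blocks can compensate for the change in $B_1$; this follows because the distinguished value is carried by $B_1$ alone, but making it airtight requires the rigidity statement from Corollary~\ref{cor:2-connected} to be invoked block-by-block.
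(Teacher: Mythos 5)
The paper does not spell out a proof of this lemma; it defers to Lemma~3.2 of~\cite{IKPW}, noting only that ``the calculations differ'' because more colors are available. The entire mathematical content of that lemma is precisely the counting argument, and this is where your proposal breaks down. Your plan is to vary the type $(\alpha,\beta)$ of one distinguished block $B_1$ through $\ceil{\sqrt{\Delta}}$ values ``while keeping the other blocks' colorings fixed,'' and you later refer to ``the other (identically colored) blocks.'' If two of the copies $B_2,\ldots,B_s$ carry identical (or merely isomorphic) colorings, then the transposition of those two copies, extended by the identity elsewhere, is a non-trivial automorphism of $H_0$ fixing $u_0$ that preserves the coloring --- so the resulting coloring of $H_0$ is not distinguishing, and the lemma explicitly demands distinguishing colorings. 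The correct route is to give \emph{all} $s$ copies pairwise non-isomorphic colorings of $B$ that break $\Aut(B)_{u_0}$ (this is what Lemma~\ref{lem:2-connected} supplies, and is why that lemma is stated before this one); the isomorphism class of the assembled coloring of $H_0$ is then determined by the $s$-element set of block types, so the number of pairwise non-isomorphic distinguishing colorings of $H_0$ is $\binom{N}{s}$, where $N$ is the number of block colorings available. One must then verify both that $N\geq s$ (note $s\cdot d_B(u_0)\leq\Delta$ with $d_B(u_0)\geq 2$ only gives $s\leq\Delta/2$, so this is not automatic from $N=\ceil{\sqrt{\Delta}}$ and needs the relation between $s$, $d_B(u_0)$ and the actual supply of block colorings) and that $\binom{N}{s}\geq\ceil{\sqrt{\Delta}}$. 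Your proposal contains neither of these checks, and your single-varying-block scheme cannot be repaired without them: under the charitable reading where the fixed blocks are pairwise distinct, you would need $N\geq s-1+\ceil{\sqrt{\Delta}}$ types, which is a stronger demand than the binomial count and is nowhere justified.

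Two smaller remarks. Building the block colorings from Corollary~\ref{cor:2-connected} rather than Lemma~\ref{lem:2-connected} is workable in spirit (it even gives the stronger conclusion that every vertex of each block is fixed), but it sidesteps the lemma the paper clearly intends as the building block, and it forces you to handle separately the blocks with no cycle of length at least five inside an argument where Lemma~\ref{lem:2-connected} already absorbs that case. Your treatment of the majority condition at $u_0$ (``we can always rebalance'') is also only a sketch: the edges at $u_0$ are colored independently inside $s$ different blocks, each of which only guarantees a majority condition relative to $d_B(u_0)$, not $d_{H_0}(u_0)=s\cdot d_B(u_0)$; you should say explicitly why the union of these per-block colorings keeps every color class at $u_0$ below $\tfrac12 d_{H_0}(u_0)$, or how the recoloring is done without destroying the distinguishing property.
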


\begin{lemma}\label{lem:lem3.3}
    Let $H_0$ be a graph satisfying the assumptions of Lemma~\ref{lem:2-connected} or of Lemma~\ref{lem:1-connected}. Let $T$ be a symmetric tree of order at least 3 with a central vertex $v_0$. A graph $H$ is obtained by attaching a copy of the graph $H_0$ to every leaf of $T$ in such a way that each pendant edge of $T$ is incident with the same vertex $u_0$ of $H_0$. If the maximum degree of $H$ is at most $\Delta$ and $\Delta \geq 4$, then there exists a distinguishing coloring of $H$ with colors from the set $Z$, and without $C_0(1,2)$.
\end{lemma}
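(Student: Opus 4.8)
The plan is to build the distinguishing coloring of $H$ by combining two ingredients: a distinguishing coloring of the tree $T$ whose edges connect to the copies of $H_0$, and, for each leaf of $T$, one of the $\ceil{\sqrt{\Delta}}$ pairwise non-isomorphic distinguishing colorings of $H_0$ guaranteed by Lemma~\ref{lem:2-connected} or Lemma~\ref{lem:1-connected}. The key observation, inherited from the structure of the proof of Lemma~3.3 in~\cite{IKPW}, is that an automorphism $\varphi$ of $H$ must permute the copies of $H_0$ (since each is a maximal sub-object of its type attached at a leaf), hence it induces an automorphism $\bar\varphi$ of $T$ fixing the center $v_0$; moreover, on each copy of $H_0$ it must respect the distinguished vertex $u_0$ (the one incident with the pendant edge of $T$), so it acts within $\Aut(H_0)_{u_0}$ on each copy.

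First I would color the copies of $H_0$. Since $T$ has at least $\ceil{\sqrt{\Delta}}$ leaves is \emph{not} guaranteed, one must be slightly careful: if $T$ has $\ell$ leaves with $\ell \le \ceil{\sqrt{\Delta}}$, assign to the $\ell$ leaves $\ell$ distinct colorings among the $\ceil{\sqrt{\Delta}}$ available, so that no two copies of $H_0$ are isomorphic via an element of $\Aut(H_0)_{u_0}$; if $T$ has more than $\ceil{\sqrt{\Delta}}$ leaves, then $T$ itself already has enough structure that a distinguishing coloring of $T$ pins everything down, and we may assign the $H_0$-colorings arbitrarily subject only to majority at $u_0$. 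In either case, because each individual copy is colored by a coloring that breaks $\Aut(H_0)_{u_0}$, any automorphism $\varphi$ of $H$ preserving the coloring restricts to the identity on every copy of $H_0$; in particular it fixes every $u_0$, hence fixes every leaf of $T$.

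Next I would color the edges of $T$. By Theorem~\ref{thm:D'}, $T$ has a distinguishing edge coloring with at most $\Delta(T) \le \Delta$ colors; but we need the coloring to be a majority (in fact almost majority) coloring and to avoid $C_0(1,2)$ (which is automatic, as $T$ is a tree and contains no cycle). The tree edges, together with the already-colored pendant edges at each $u_0$, must satisfy the majority condition at internal vertices of $T$; since internal vertices of $T$ have all their incident edges inside $T$ (plus possibly the structure is such that $\deg_H(v) = \deg_T(v)$ for internal $v$), we apply Theorem~\ref{thm:2colors} / Corollary~\ref{cor:almost} to color $E(T)$ with few colors as an almost majority coloring, then recolor a bounded number of edges near the leaves to repair majority at the $u_0$ vertices and to inject a distinguishing pattern into $T$ if $\ell > \ceil{\sqrt{\Delta}}$. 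Since $\varphi$ already fixes all leaves, and a leaf-fixing automorphism of a symmetric tree with center $v_0$ is determined by its action level-by-level, a standard argument (color the edges at each internal vertex so that the subtrees hanging below are distinguished — possible because we fixed the leaves and can use the $H_0$-colorings or the tree colors as labels) shows $\varphi$ fixes $T$ pointwise.

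The main obstacle I anticipate is the bookkeeping at the vertices $u_0$: each $u_0$ receives $\deg_B(u_0) \cdot s$ edges from inside its copy of $H_0$ plus one pendant edge of $T$, and we must ensure the combined coloring is majority at $u_0$ while the internal $H_0$-coloring was only guaranteed to be majority (or almost majority) \emph{within} $H_0$. This is handled exactly as in the proof of Lemma~\ref{lem:asymmetric} and Lemma~\ref{lem:a+b}: reserve the pendant edge's color to be one used on strictly fewer than half of the already-colored edges at $u_0$, which is possible because $\deg_H(u_0) \le \Delta$ and we have at least seven colors in $Z$, so the pigeonhole gives a color appearing at most $\floor{\deg_H(u_0)/2}$ times among the colored edges. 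The rest of the argument is routine verification that (i) the global coloring is majority — checked vertex by vertex, with internal $T$-vertices covered by the almost majority coloring of $T$, the $u_0$ vertices by the reservation trick, and the interior $H_0$-vertices by Lemma~\ref{lem:2-connected}/\ref{lem:1-connected} — and (ii) the global coloring is distinguishing, which follows by composing the two fixing arguments above: $\varphi$ fixes each copy of $H_0$ pointwise, hence all leaves of $T$, hence all of $T$ pointwise, hence all of $H$.
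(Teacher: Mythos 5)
There is a genuine gap, and it sits exactly where you split on the number of leaves of $T$. The paper does not write out a proof of this lemma (it defers to Lemma~3.3 of~\cite{IKPW}), but the intended argument is a recursive one whose engine is a \emph{product} count: at each internal vertex of $T$, the sibling subtrees hanging below it are separated by the \emph{pair} consisting of the color of the connecting edge and the isomorphism class of the already-colored subtree beneath, which gives $\ceil{\sqrt{\Delta}}\cdot\ceil{\sqrt{\Delta}}\geq\Delta$ distinct labels --- enough for up to $\Delta$ siblings. This is propagated level by level from the leaves (where Lemmas~\ref{lem:2-connected} and~\ref{lem:1-connected} supply the $\ceil{\sqrt{\Delta}}$ pairwise non-isomorphic colorings) up to the center, with each rooted subtree again admitting $\ceil{\sqrt{\Delta}}$ non-isomorphic colorings so the count can be reused one level higher. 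That is the whole reason those lemmas provide $\ceil{\sqrt{\Delta}}$ colorings rather than a single one. Your proposal never invokes this count.

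Concretely, your fallback for the case $\ell>\ceil{\sqrt{\Delta}}$ --- ``$T$ itself already has enough structure that a distinguishing coloring of $T$ pins everything down, and we may assign the $H_0$-colorings arbitrarily'' --- fails. Take $T=K_{1,\Delta-1}$ (a legitimate symmetric tree): distinguishing it by edge colors alone requires $\Delta-1$ colors, while $|Z|=\ceil{\sqrt{\Delta}}+5$, so for large $\Delta$ two pendant edges share a color; if the two corresponding copies of $H_0$ also receive isomorphic colorings (which ``arbitrarily'' permits), swapping those two branches is a nontrivial color-preserving automorphism. Relatedly, your inference ``each copy's coloring breaks $\Aut(H_0)_{u_0}$, hence $\varphi$ restricts to the identity on every copy, hence fixes every leaf'' is backwards: breaking $\Aut(H_0)_{u_0}$ only kills automorphisms that already stabilize a copy setwise and fix its $u_0$; it says nothing about automorphisms permuting distinct copies. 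You must first fix the copies setwise (via the edge-color/subtree-class pairs), and only then conclude pointwise fixation. The first half of your argument (the $\ell\leq\ceil{\sqrt{\Delta}}$ case) and the majority bookkeeping at $u_0$ via the reservation trick are fine, but without the product argument the proof does not go through for trees with many leaves, which is the main case.
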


Using the previous lemmas we prove the following theorem for graphs of connectivity one.

\begin{theorem}\label{thm:1-connected}
    Let $G$ be a graph of connectivity 1 and without pendant edges. If $G$ has a maximum degree $\Delta$, then 
    $$M'_D(G) \leq \ceil{\sqrt{\Delta}}+5. $$
\end{theorem}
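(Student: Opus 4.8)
The plan is to follow the standard block-tree decomposition of a connected graph of connectivity 1. Let $\mathcal{B}$ be the block-cut-tree of $G$: its nodes are the blocks (maximal 2-connected subgraphs, together with bridges) and the cut vertices, with an edge between a block and a cut vertex it contains. Since $G$ has no pendant edges, every block is 2-connected (there are no bridges), so each block is covered by Theorem~\ref{thm:2-connected} and Corollary~\ref{cor:2-connected}, and the local lemmas Lemma~\ref{lem:2-connected}, Lemma~\ref{lem:1-connected}, Lemma~\ref{lem:lem3.3} are all applicable because we have assumed $\Delta \ge 4$. The idea, as in \cite{IKPW}, is to root the block-cut-tree at a central block or central cut vertex of $\mathcal{B}$ (choosing the center so that any automorphism of $G$ fixes the root node setwise), and then color the blocks in a breadth-first manner, at each stage using the freedom of having $\ceil{\sqrt{\Delta}}$ pairwise non-isomorphic distinguishing colorings of a pendant block (or pendant bundle of blocks sharing a cut vertex) to break the symmetries that permute sibling subtrees.

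Concretely, I would proceed as follows. First, handle the root: if the center of $\mathcal{B}$ is a block $H_0$, color it with a distinguishing coloring from Theorem~\ref{thm:2-connected} (equivalently, from Lemma~\ref{lem:2-connected} applied at any of its cut vertices) so that $H_0$ is rigid; if the center is a cut vertex $u_0$ lying in $s \ge 2$ blocks, we are exactly in the situation of Lemma~\ref{lem:1-connected} (after possibly grouping isomorphic blocks), which gives a distinguishing coloring of the union of those blocks. Either way, after this step every vertex of the root node is fixed by every color-preserving automorphism of $G$. Then, descending the tree level by level: suppose a 2-connected subgraph $F$ (the union of all blocks already processed) has been colored distinguishingly, and let $u$ be a cut vertex of $F$ through which unprocessed blocks hang. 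The unprocessed blocks at $u$ fall into isomorphism classes; within each class we must color the copies so that no nontrivial permutation of the copies extends to an automorphism. This is precisely what Lemma~\ref{lem:2-connected} / Lemma~\ref{lem:1-connected} provide: at least $\ceil{\sqrt{\Delta}}$ pairwise non-isomorphic colorings avoiding $C_0(1,2)$, and since at most $\ceil{\sqrt{\Delta}}$ copies can meet at $u$ (the degree bound forces $d_F(u) + (\text{copies})\cdot 2 \le \Delta$, so the number of copies beyond one is small — this is the point of taking $\ceil{\sqrt{\Delta}}$ colorings), we can assign distinct colorings to distinct copies. When a hanging structure is itself a symmetric tree of blocks rather than a single block or bundle, Lemma~\ref{lem:lem3.3} handles it directly. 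Throughout, the colorings supplied by these lemmas use only colors from $Z = \{0,0',1,\ldots,\ceil{\sqrt{\Delta}}+3\}$, i.e. $\ceil{\sqrt{\Delta}}+5$ colors, and they are majority (almost majority on the interface vertices), so by Lemma~\ref{lem:a+b}-type reasoning the assembled coloring is a majority coloring of all of $G$; the cases $\Delta \le 3$ are dispatched by Theorem~\ref{thm:D'} as already noted.

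The main obstacle, and the place where care is genuinely needed, is the bookkeeping at a shared cut vertex $u$: we must simultaneously (i) keep the coloring majority \emph{at $u$} — the edges of $u$ are distributed among the already-colored side $F$ and the several hanging blocks, and no color may occupy more than $d_G(u)/2$ of them; (ii) ensure the chosen block-colorings are non-isomorphic \emph{with respect to the stabilizer $\Aut(H_0)_{u_0}$}, not merely non-isomorphic as abstract edge-colored graphs, because only automorphisms fixing $u$ are relevant at this stage; and (iii) verify that forbidding the color pattern $C_0(1,2)$ in the hanging blocks (needed so that the special two-color structure on the root cycle cannot be mimicked elsewhere) is compatible with still having $\ceil{\sqrt{\Delta}}$ usable colorings. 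All three points are exactly the content of Lemmas~\ref{lem:2-connected}, \ref{lem:1-connected}, \ref{lem:lem3.3}, so the real work is checking that their hypotheses are met at every node and that gluing preserves both the distinguishing property (no automorphism can act nontrivially on the root, hence none can permute the level-1 subtrees, hence by induction everything is fixed) and the majority property (each lemma outputs an almost majority coloring, and the only vertices shared between consecutively processed pieces are cut vertices, where the ``almost'' slack of a rounding-up is absorbed because the vertex has positive degree on both sides). Once the inductive invariant — ``$F$ is colored by an almost majority, distinguishing, $C_0(1,2)$-free coloring using colors from $Z$, and every vertex of $F$ is fixed'' — is stated precisely, the induction closes with no further ideas.
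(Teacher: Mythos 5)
Your proposal follows essentially the same route as the paper: color an initial block via Corollary~\ref{cor:2-connected} (with the $C_0(1,2)$ pattern reserved for it), then process the cut vertices outward, using Lemmas~\ref{lem:2-connected}, \ref{lem:1-connected} and \ref{lem:lem3.3} to assign pairwise non-isomorphic, $C_0(1,2)$-free, almost majority colorings to the hanging blocks, and finally color leftover edges at cut vertices avoiding forbidden colors. The only cosmetic difference is that you root the block-cut-tree at its center, whereas the paper starts from an arbitrary block and lets the uniqueness of the $C_0(1,2)$ pattern fix it; both are sound and the rest of the argument is identical.
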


\begin{proof}
    Again we use the idea of the proof of Theorem~3.5 from~\cite{IKPW}. 
    We choose any block $B_0$ from graph $G$ and color it like in Corollary~\ref{cor:2-connected}. Next we consider the cut vertices of $G$ in any fixed ordering $v_1, v_2, \ldots, v_p$, such that $v_i$ for $i \in [p]$ belong to colored block and uncolored block at the same time. We consider the following two stages.

    \begin{enumerate}
        \item Let $B_1, \ldots, B_l$ be pairwise non-isomorphic, uncolored 2-connected block containing $v_i$. For each $j \in \{1, \ldots, l\}$, we consider a maximal subgraph $H_0$ of $G$ that consists of $s \geq 1$ copies of $B_j$ sharing the cut vertex $v_i$. Taking $u_0=v_i$, by Lemma~\ref{lem:2-connected} if $s=1$, or by Lemma~\ref{lem:1-connected} if $s \geq 2$, there is a distinguishing coloring of $H_0$ with colors from the set $Z$, not containing $C_0(1,2)$.
        \item We consider every uncolored maximal subgraph $H$ satisfying the assumptions of Lemma~\ref{lem:lem3.3}. We color $H$ distinguishingly according to the conclusion of Lemma~\ref{lem:lem3.3}. Consequently, all 2-connected blocks containing $v_i$ are now colored.
    \end{enumerate}
    
    Finally, we color every yet uncolored edge $e$ incident with $v_i$ with any color which is not forbidden in its endpoints. Note that both endpoints of $e$ are already fixed by any $\varphi \in \Aut(G)$ preserving the existing partial coloring, because uncolored components of $G-v_i$ are pairwise non-isomorphic.

    This procedure yields a majority distinguishing coloring of $G$ with $\ceil{\sqrt{\Delta}}+5$ colors, which completes the proof of Theorem~\ref{thm:1-connected}, and thus Theorem~\ref{thm:main}.
\end{proof}

\section{Symmetric digraphs}\label{sec:digraphs}

In this section we consider majority coloring and majority distinguishing coloring for symmetric digraphs.
We start with a simple corollary from the proof of Theorem~\ref{thm:deg2} contained in~\cite{BKPPRW}, which is helpful to prove the bound on the majority index for digraphs. The key component of that proof is a decomposition of the edge set of a graph such that each vertex in a newly obtained graph is of degree two or three. The proper edge coloring of the new graph is used to obtain a majority coloring of the original graph. The fact that bipartite graphs are of Class 1 allows us to use three colors for the proper coloring of the new graph.

\begin{corollary}\label{cor:bipartite}
    Every bipartite graph of minimum degree at least 2 has a majority 3-edge-coloring.
\end{corollary}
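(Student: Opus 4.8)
The plan is to re-run the proof of Theorem~\ref{thm:deg2} from~\cite{BKPPRW}, observing that bipartiteness is preserved by its vertex-splitting construction, so that König's edge-coloring theorem lets us replace the four colors supplied by Vizing's bound by three. Let $G$ be bipartite with parts $X$ and $Y$ and $\delta(G)\ge 2$. Since every integer $d\ge 2$ can be written as a sum of $2$'s and $3$'s, for each vertex $v$ of $G$ we may partition the edges incident with $v$ into groups of size $2$ or $3$; creating one new vertex for each such group and attaching to it exactly the edges of that group yields an auxiliary graph $G'$ whose edge set is in a canonical bijection with $E(G)$ and in which every vertex has degree $2$ or $3$, so $\Delta(G')\le 3$.

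Next I would note that $G'$ is bipartite: put every split copy of a vertex $x\in X$ into a class $X'$ and every split copy of $y\in Y$ into a class $Y'$; each edge of $G'$ is a copy of an edge of $G$ joining $X$ to $Y$, hence joins $X'$ to $Y'$. By König's theorem (bipartite graphs are of Class~1), $G'$ admits a proper edge coloring with $\chi'(G')=\Delta(G')\le 3$ colors. Transfer this coloring back to $G$ along the edge bijection.

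Finally I would check that the resulting $3$-coloring of $G$ is a majority coloring, exactly as in~\cite{BKPPRW}. Fix a vertex $v$ of $G$, and let $v_1,\dots,v_t$ be its split copies, so that $\sum_{i} d_{G'}(v_i)=d_G(v)$ with each $d_{G'}(v_i)\in\{2,3\}$. Properness of the coloring of $G'$ means that at each $v_i$ every color appears at most once, hence for every color $\alpha$ we have $d^{\alpha}(v)\le t$; and since $d_G(v)=\sum_i d_{G'}(v_i)\ge 2t$, we conclude $d^{\alpha}(v)\le t\le \tfrac{d_G(v)}{2}$, which is precisely the majority condition.

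The only point requiring care — and the reason this is stated as a corollary rather than a standalone result — is confirming that the decomposition used in the proof of Theorem~\ref{thm:deg2} is indeed of the above form (or at least one that keeps the two sides of the bipartition in separate classes of $G'$), so that invoking the Class~1 property is legitimate. Once that is verified, the argument is the verbatim proof of Theorem~\ref{thm:deg2} with $4$ replaced by $3$, and no genuine obstacle remains.
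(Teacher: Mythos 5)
Your proposal is correct and matches the paper's intended argument: the paper likewise takes the degree-$\{2,3\}$ vertex-splitting decomposition from the proof of Theorem~\ref{thm:deg2}, observes that the auxiliary graph inherits bipartiteness and hence is of Class~1, and transfers the resulting proper $3$-edge-coloring back to obtain a majority coloring. Your explicit verification of the majority condition via $d^{\alpha}(v)\le t\le \tfrac{d_G(v)}{2}$ is exactly the mechanism the cited proof relies on.
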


\begin{theorem}
    If $G$ is a graph of minimum degree at least 2, then $M'(\overleftrightarrow{G})\leq 3$.
\end{theorem}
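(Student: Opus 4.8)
The plan is to reduce the statement about the symmetric digraph $\overleftrightarrow{G}$ to the undirected majority $3$-edge-coloring of a suitable auxiliary bipartite graph, and then invoke Corollary~\ref{cor:bipartite}. The natural candidate is the bipartite ``split'' graph $B$ with vertex classes $V^- = \{v^- : v \in V(G)\}$ and $V^+ = \{v^+ : v \in V(G)\}$, where we put an edge between $u^+$ and $v^-$ for every edge $uv$ of $G$ (equivalently, one edge of $B$ for each arc of $\overleftrightarrow{G}$: the arc $\overrightarrow{uv}$ corresponds to the edge $u^+v^-$, thinking of it as ``leaving $u$, entering $v$''). Since $\overleftrightarrow{G}$ is symmetric, $B$ is exactly the bipartite double cover of $G$ with the edge $uv$ contributing the two edges $u^+v^-$ and $v^+u^-$. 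The degree of $v^+$ in $B$ equals the out-degree of $v$ in $\overleftrightarrow{G}$, which is $d_G(v) \geq 2$, and likewise for $v^-$; hence $B$ is a bipartite graph of minimum degree at least $2$.

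By Corollary~\ref{cor:bipartite}, $B$ has a majority $3$-edge-coloring $c_B$. First I would transport this coloring back to the arcs of $\overleftrightarrow{G}$: color the arc $\overrightarrow{uv}$ with $c_B(u^+v^-)$. Then the edges of $B$ incident with $v^+$ are precisely the arcs leaving $v$ in $\overleftrightarrow{G}$, and the majority condition at $v^+$ in $B$ says that at most half of them carry any fixed color; similarly the edges at $v^-$ are the arcs entering $v$, and the majority condition at $v^-$ gives the entering condition. So the transported coloring satisfies exactly the definition of a majority arc $3$-coloring of $\overleftrightarrow{G}$, which is what we want.

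**The main thing to be careful about** is the correspondence between ``half of the arcs entering $u$'' / ``half of the arcs leaving $u$'' in the digraph and the two separate vertices $u^-$, $u^+$ of $B$ — but this is clean precisely because $B$ is bipartite and the split separates in-arcs from out-arcs, so no arc is double-counted and the two majority conditions at $u$ decouple into the single majority conditions at $u^-$ and at $u^+$. There is no real obstacle here: the only content is Corollary~\ref{cor:bipartite}, and the rest is the bookkeeping of the split/bipartite-double-cover construction. One should just check at the start that $B$ is a legitimate simple graph of minimum degree $\geq 2$, which follows from $\delta(G) \geq 2$, and then everything goes through.
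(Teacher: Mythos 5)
Your proposal is correct and follows essentially the same route as the paper: both construct the bipartite split graph (two copies of $V(G)$, one edge per arc), apply Corollary~\ref{cor:bipartite} to get a majority $3$-edge-coloring, and transport it back to the arcs using the correspondence between the degrees of the split vertices and the in-/out-degrees. The only difference is notational ($V^+,V^-$ versus $V_1,V_2$).
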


\begin{proof}
    We define an auxiliary bipartite graph $G'$ from the digraph $\overleftrightarrow{G}$ as follows.
    The set of vertices $V(G')$ is obtained from two copies of the set $V(\overleftrightarrow{G})$. We denote them $V_1$ and $V_2$. If $\overrightarrow{uv}$ is an arc of $\overleftrightarrow{G}$, then $u_1v_2$ in an edge of $G'$.
    
    Notice that for any $v_1 \in V_1$ it holds that $d_{G'}(v_1)=d^+_{\overleftrightarrow{G}}(v)$ and for any $v_2 \in V_2$ it holds that $d_{G'}(v_2)=d^-_{\overleftrightarrow{G}}(v)$. Moreover, there is a one-to-one correspondence between the edges of $G'$ and the arcs of $\overleftrightarrow{G}$.
    
    From Corollary~\ref{cor:bipartite} there exists a majority coloring of $G'$ with three colors. Hence, we obtain a coloring of the arcs of $\overleftrightarrow{G}$ with three colors which is a majority arc coloring.
\end{proof}

    We finish this section with a result for symmetric digraphs. 

\begin{theorem}\label{thm:digraphs}
    If $\overleftrightarrow{G}$ is a connected symmetric digraph with minimum degree $\delta(G) \geq 2$, then $$ M'_D(\overleftrightarrow{G}) \leq \ceil{\sqrt[4]{\Delta(G)}}+4. $$
\end{theorem}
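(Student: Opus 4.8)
The point of departure is the elementary identity $\lceil\sqrt[4]{\Delta}\rceil \approx \lceil\sqrt{\lceil\sqrt{\Delta}\rceil}\rceil$, which suggests reducing the problem to the undirected main result (Theorem~\ref{thm:main}) applied to an auxiliary \emph{undirected} graph $G^{\ast}$ whose maximum degree is of order $\sqrt{\Delta(G)}$. As usual we may assume $\Delta(G)$ exceeds a suitable constant, the bounded case being covered by $\chi'_D$-type estimates and the earlier propositions. First I would build $G^{\ast}$ by \emph{splitting} each vertex $v$ of $G$ into $\lceil\sqrt{d_G(v)}\rceil$ copies, distributing the $d_G(v)$ edges incident with $v$ among these copies as evenly as possible so that each copy receives at most $\tfrac12 d_G(v)$ of them, and joining the copies of $v$ by a path so that $G^{\ast}$ stays connected; with a little care in the allocation one gets $\Delta(G^{\ast}) \le \lceil\sqrt{\Delta(G)}\rceil$. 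Every edge of $G^{\ast}$ is then either a \emph{split edge} (between two copies of one vertex) or corresponds to a unique edge $uv$ of $G$, hence to the unordered pair of opposite arcs $\overrightarrow{uv},\overrightarrow{vu}$ of $\overleftrightarrow{G}$.

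Next I would invoke Theorem~\ref{thm:main} (more precisely the construction behind it) to colour $G^{\ast}$ with a majority distinguishing edge colouring using at most $\lceil\sqrt{\Delta(G^{\ast})}\rceil+5$ colours, and then pull this colouring back to $\overleftrightarrow{G}$: both arcs $\overrightarrow{uv}$ and $\overrightarrow{vu}$ receive the colour of the corresponding edge of $G^{\ast}$, while the split edges are discarded. The majority condition is then checked directly. Fix a vertex $v$ and a colour $\alpha$: each copy $v_i$ of $v$ carries at most $\lceil\tfrac12\lceil\sqrt{d_G(v)}\rceil\rceil$ edges of colour $\alpha$ (the majority condition inside $G^{\ast}$, with the lone ``odd'' split edge absorbing the rounding at all but one vertex), so summing over the $\lceil\sqrt{d_G(v)}\rceil$ copies bounds the number of out-arcs, and likewise of in-arcs, at $v$ of colour $\alpha$ by $\tfrac12 d_G(v)$ after a short computation; the residual tight vertices are dealt with exactly as the special vertices of Theorem~\ref{thm:2colors}. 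Tracking this slack together with the rounding in $\sqrt{\Delta(G^{\ast})}$ and the special role of the colours $0,0'$ is precisely what lets one replace $\lceil\sqrt[4]{\Delta}\rceil+5$ by $\lceil\sqrt[4]{\Delta}\rceil+4$.

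The main obstacle is the distinguishing part, i.e. transferring the rigidity of the $G^{\ast}$-colouring back to $\overleftrightarrow{G}$. An automorphism $\psi$ of $\overleftrightarrow{G}$ preserving the pulled-back arc colouring is an automorphism of $G$, but a priori it need not induce an automorphism of $G^{\ast}$, since it does not prescribe how the copies of a vertex are permuted. I would remedy this by making the splitting \emph{canonical with respect to the colouring}: the edges incident with $v$ are assigned to the copies of $v$ according to the multiset of colours the construction gives them, in a fixed tie-breaking order, so that the partition of $E_G(v)$ into copies is recoverable from the arc colouring alone. Any colour-preserving $\psi$ then respects this partition and lifts to a colour-preserving automorphism of $G^{\ast}$, which by Theorem~\ref{thm:main} is the identity; hence $\psi$ is the identity and the arc colouring is distinguishing. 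The delicate point is that in the recursive, sphere-by-sphere construction of Lemma~\ref{lem:H} the colours at $v$ are revealed only gradually, so the splitting cannot be fixed in advance; I would therefore interleave the two processes, deciding the copy of each newly coloured edge of $\overleftrightarrow{G}$ at the moment its colour is determined, and verify that this choice can always be made consistently with the degree budget of $G^{\ast}$ and with the four invariants of Lemma~\ref{lem:H}.
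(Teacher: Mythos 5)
Your construction never genuinely uses the fact that $\overleftrightarrow{G}$ is a digraph: both arcs $\overrightarrow{uv}$ and $\overrightarrow{vu}$ always receive the same colour, so if the argument worked it would prove $M'_D(G)\leq\ceil{\sqrt[4]{\Delta}}+4$ for every \emph{undirected} graph with $\delta\geq 2$. That is false: the pair-counting behind Proposition~\ref{prop:K2n} shows that a majority distinguishing colouring of $K_{2,n}$ needs roughly $\sqrt{n}=\sqrt{\Delta}$ colours, since each vertex of $Y$ is determined only by the ordered pair of colours on its two edges. This is the signal that any proof of the digraph bound must exploit the two arcs per edge, and the paper's proof does exactly that: it reruns the recursive construction of Lemma~\ref{lem:H}, assigning an \emph{ordered pair} of colours to each pair of opposite arcs, so that $\ceil{\sqrt[4]{\Delta}}$ colours already produce the $\ceil{\sqrt{\Delta}}$ distinct labels needed to separate the permutable set $A$ in each sphere. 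Your vertex-splitting reduction cannot substitute for this.

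Concretely, two steps of your plan fail. First, the majority count: a copy $v_i$ of $v$ carrying $e_i$ original edges and one or two split edges may have up to $\lfloor(e_i+2)/2\rfloor\le e_i/2+1$ edges of colour $\alpha$, so summing over the $\ceil{\sqrt{d_G(v)}}$ copies bounds the number of $\alpha$-coloured out-arcs at $v$ only by $d_G(v)/2+\ceil{\sqrt{d_G(v)}}$, which exceeds the required $d_G(v)/2$; the slack goes the wrong way, and an excess of order $\sqrt{d}$ cannot be absorbed by designating special vertices as in Theorem~\ref{thm:2colors}. Second, the distinguishing lift: the information of which copy of $v$ an edge is attached to exists in $G^{\ast}$ but is erased by the pullback, and it cannot in general be reconstructed from the colours alone (two edges at $v$ with equal colour may lie in different copies, and no tie-breaking order is automorphism-invariant); moreover a lift, even if it existed combinatorially, would also have to preserve the colours of the discarded split edges, which nothing guarantees. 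The $K_{2,n}$ example shows this is not a technicality: two vertices of $Y$ receiving the same colour pair are separated in $G^{\ast}$ only by the copies of the $X$-vertices they attach to, and become interchangeable after the pullback. Finally, even granting all of the above, Theorem~\ref{thm:main} applied to $G^{\ast}$ yields $\ceil{\sqrt{\Delta(G^{\ast})}}+5\ge\ceil{\sqrt[4]{\Delta}}+5$ colours, and no actual argument is given for reducing this to $+4$.
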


\begin{proof}
    The proof is analogous to the proof of main Theorem~\ref{thm:main}. It differs in a few details.
    We construct a majority distinguishing coloring of a symmetric digraph~$\overleftrightarrow{G}$ using colors from the set $Z=\{0,1,\ldots,\ceil{\sqrt[4]{\Delta(G)}}+3\}$. 
    We color the symmetric directed cycle~$\overleftrightarrow{C_0}$ with $(0,\alpha,\beta) \in Z$ using color $0$ on one of the directed cycles of~$C_0$, and $\alpha$ and $\beta$ on the remaining arcs, but we use color $\alpha$ on exactly one arc. This way we have exactly one additional color $0$ for the cycle. 
    
    We proceed recursively in a similar way as in the proof for graphs, coloring the arcs between spheres $S_{r}$ and $S_{r+1}$ in each step. Consider the set $A$ of vertices that can be permuted to obtain an automorphism of $H$. The difference is that instead of assigning a color to an edge $uv$, we assign a pair of colors to the pair of arcs between the vertices $u$ and $v$. Hence, to obtain $\ceil{\sqrt{\Delta}}$ different pairs it suffices to use $\ceil{\sqrt[4]{\Delta}}$ different colors on each of the positions in the pair.
        
    Therefore it is enough to use $\ceil{\sqrt[4]{\Delta}}+4$ colors.
\end{proof}



\begin{thebibliography}{9}
\bibitem{BKPPRW}
F. Bock, R. Kalinowski, J. Pardey, M. Pil\'sniak, D. Rautenbach, M. Wo\'zniak, \textit{Majority Edge-Colorings of Graphs}, Electronic J. Comb. 30 (2023) \#P1.42; doi:10.37236/11291.

\bibitem{IKPW}
W. Imrich, R. Kalinowski, M. Pil\'sniak, M. Woźniak, \textit{The distinguishing index of connected graphs without pendant edges}, Ars Math. Contemp. 18 (2020) 117-126; doi: 10.26493/1855-3974.1852.4f7.

\bibitem{KP}
R. Kalinowski, M. Pil\'sniak, \textit{Distinguishing graphs by edge-colorings}, European J. Combin. 45 (2015) 124-131.

\bibitem{KSV}
J. H. Kim, B. Sudakov, V. H. Vu, \textit{On the asymmetry of random regular graphs and random graphs}, Random Structures Algorithms, 21 (2002), pp. 216–224.


\end{thebibliography}
\end{document}